\newtheorem{remark}{Remark}[section]
\newtheorem{example}{Example}[section]
\newcommand{\R}{\mathbb{R}}
\def\d{{\rm d}}
\def\II{(\Omega)}
\title{Arbitrarily high-order exponential cut-off methods for preserving maximum principle\\ of parabolic equations}
\author{
Buyang Li\thanks{Department of Applied Mathematics, The Hong Kong Polytechnic University, Hung Hom, Hong Kong. E-mail address: buyang.li@polyu.edu.hk}
\and
Jiang Yang\thanks{Department of Mathematics \& SUSTech International Center for Mathematics, Southern University of Science and Technology, Shenzhen 518055, China. E-mail address: yangj7@sustech.edu.cn}
\and
Zhi Zhou\thanks{Department of Applied Mathematics, The Hong Kong Polytechnic University, Hung Hom, Hong Kong. E-mail address: zhizhou@polyu.edu.hk}
}
\begin{document}

\maketitle

\begin{abstract}
A new class of high-order maximum principle preserving numerical methods is proposed for solving parabolic equations, with application to the semilinear Allen--Cahn equation. The proposed method consists of a $k$th-order multistep exponential integrator in time, and a lumped mass finite element method in space with piecewise $r$th-order polynomials and Gauss--Lobatto quadrature. At every time level, the extra values violating the maximum principle are eliminated at the finite element nodal points by a cut-off operation.
The remaining values at the nodal points satisfy the maximum principle and are proved to be convergent with an error bound of $O(\tau^k+h^r)$. The accuracy can be made arbitrarily high-order by choosing large $k$ and $r$.
Extensive numerical results are provided to illustrate the accuracy of the proposed method and the effectiveness in capturing the pattern of phase-field problems.
\end{abstract}

\begin{keywords}
{parabolic equation, Allen--Cahn equation, maximum principle, high-order, exponential integrator, lumped mass, cut off}
\end{keywords}

\begin{AMS}
65M30, 65M15, 65M12
\end{AMS}

\section{Introduction}

The evolution of physical quantities, such as the density, concentration, pressure, can often be described by time-dependent parabolic partial differential equations (PDEs). In this case, the solutions of these PDEs can only take values in a given range to be consistent with physical phenomena. Mathematically, this property is often guaranteed by the maximum principle of the parabolic PDEs. In the numerical simulation, it is also desired to guarantee that the numerical solutions only take values in the given range to be consistent with physical phenomena and to avoid producing spurious solutions. Correspondingly, great efforts have been made in developing and analyzing numerical methods that preserve maximum principle in the discrete setting.

It is well known that the standard backward Euler time-stepping scheme and
central finite difference method in space can preserve the maximum principle of linear parabolic equations \cite[Chapter 9]{Larsson-Thomee-2003}.
It is also known that the backward Euler time-stepping scheme with lumped mass linear finite element method (FEM) in space, using simplicial triangulation with  acute angles, can preserve the maximum principle \cite{Fujii-1973}.
This generalizes in two-dimensional space to triangulations of Delauney type, which is proved to be essentially sharp  \cite{Thomee:2008}.
Without using mass lumping, the standard Galerkin FEMs normally do not preserve the  maximum principle of parabolic equations \cite{Thomee-2015, Thomee:2008, Schatz:2010}.
See also \cite{Chat:2015} for an analysis
of finite volume element method. These methods are all first-order in time and second-order in space.



In recent years, many efforts were made in constructing maximum principle preserving (MPP) methods for the initial-boundary value problems of the Allen--Cahn phase field equation
\begin{align}
\left\{
\begin{aligned}
&\frac{\partial u}{\partial t}= \Delta u+ f(u) &&\mbox{in}\,\,\,\Omega\times (0,T],\\[3pt]
&\partial_nu=0 &&\mbox{on}\,\,\,\partial\Omega\times (0,T],\\[2pt]
&u|_{t=0}=u_0 &&\mbox{in}\,\,\,\Omega,
\end{aligned}
\right.
\end{align}
where $f(u)=-F'(u)$ with a double-well potential $F$ that has two wells at $\pm\alpha$, for {   some known parameter $\alpha>0$}; see Examples \ref{AC:1} and \ref{AC:2}
for two popular choices of potentials. It is well-known that the Allen--Cahn equation has the maximum principle: if the initial value satisfies $|u_0|\le \alpha$ then the solution can take values only in the interval $[-\alpha,\alpha]$ at later time.
In \cite{tangyang1,ShenTangYang:2016}, it was proved that the stabilized backward Euler time-stepping scheme,
with central difference method in space,  preserves the maximum principle under certain time stepsize restriction.
%
More recently, a stabilized exponential time differencing scheme was proposed in \cite{DuJuLiQiao:2019} for solving the nonlocal Allen--Cahn equation. For local Allen--Cahn equation, their spatial discretization method automatically reduces to the second-order central difference method. The scheme preserves the maximum principle under certain time stepsize restriction related to the nonlinearity of the Allen--Cahn equation, with second-order convergence in both time and space.
{   The argument in \cite{DuJuLiQiao:2019} was generalized to a class of semilinear parabolic equations in \cite{DuJuLiQiao:2020}.}
As far as we know, this is the highest-order linearly implicit MMP method for the Allen--Cahn equation without stepsize conditions.
{    A second-order MMP
backward differentiation formula with nonuniform meshes for the Allen-Cahn equation was analyzed in \cite{LiaoTangZhou:2020} under some practical stepsize constraints.}
The construction of higher-order MPP methods for the Allen--Cahn equation is still challenging.

A cut-off finite difference method was proposed in \cite{Lu-Huang-Vleck-2013} for linear parabolic equations. The method cuts off the extra values which violate the maximum principle, resulting in a solution which satisfies the maximum principle. It is mentioned that the $\theta$-method in time with second-order central difference method in space satisfies the conditions in \cite{Lu-Huang-Vleck-2013} and therefore are MPP methods.

A third-order spatially semidiscrete discontinuous Galerkin MMP method was proposed in \cite{Liu-Yu-2014}. The method can be combined with high-order strong stability preserving (SSP) time-stepping methods to preserve maximum principle under the parabolic CFL condition $\tau=o(h^2)$. Such idea originates from solving hyperbolic conservation laws, for which a limiter can be used to control the solution values at discrete points given by high-order spatial discretization and SSP temporal discretization, under the hyperbolic CFL condition $\tau=o(h)$; see e.g., \cite{GottliebShuTadmor:2001, Jiang-Xu-2013,Liu-Osher-1996,Qiu-Shu-2005,Xu-2014,Zhang-Shu-2010} for a rather incomplete list of references, and a comprehensive overview of SSP method in \cite{Gottlieb-2011}.

Recently, an SSP integrating factor Runge--Kutta method of up to order $4$ was proposed and analyzed in \cite{IsherwoodGrantGottlieb:2018} for semilinear hyperbolic and parabolic equations. For semilinear hyperbolic and parabolic equations with strong stability (possibly in the maximum norm), the method can preserve this property and avoids the standard parabolic CFL condition $\tau=o(h^2)$, only requiring the stepsize $\tau$ to be smaller than some constant depending on the nonlinear source term. A nonlinear constraint limiter was introduced in \cite{Vegt-Xia-Xu-2019} for implicit time-stepping schemes without requiring CFL conditions, which can preserve maximum principle at the discrete level with arbitrarily high-order methods by solving a nonlinearly implicit system.

In this article, we propose a fully discrete high-order MPP method for semilinear parabolic equations, which we refer to as the exponential cut-off method. The method consists of a $k$th-order multistep exponential integrator in time and a $r$th-order lumped mass FEM in space, together with a cut-off operation to eliminate the extra values violating the maximum principle before proceeding to the next time level. At every time level, one only needs to solve a linear system of equations. We prove that the proposed method has at least $k$th-order convergence in time and $r$th-order convergence in space, without restriction on the temporal stepsize and spatial mesh size. Thus the proposed method can be made arbitrarily high-order by choosing large $k$ and $r$. We also provide numerical results to show the effectiveness of the proposed method in approximating the logarithmic Allen--Cahn equation, for which the numerical solution may blow up if the maximum principle is not preserved.

{   
The classical exponential integrators for an abstract semilinear parabolic equation
\begin{align}\label{abstract-parabolic}
\partial_tu - A u = f(t,u)
\end{align}
were developed and analyzed by many authors, including the one-step exponential Runge--Kutta methods and the exponential multistep methods.
The construction of exponential Runge--Kutta methods were developed in \cite{Lawson-1967,Ehle-Lawson-1975,Strehmel-Weiner-1987,Cox-Matthews-2002} and \cite{Hochbruck-Ostermann-2005b}. The latter contains a general principle for constructing exponential Runge--Kutta methods.
This exponential Runge--Kutta methods are multi-stage methods that reduce to the classical Runge--Kutta methods when $A=0$.
Instead of the exponential Runge--Kutta methods, we adopt the exponential multistep methods as the underlying time-stepping method (for the cut-off postprocess), which approximates $f(t,u)$ by a backward extrapolation polynomial and results in a linearly implicit method without interval stages. This class of methods were proposed in \cite{Certaine-1960} and studied more systematically in \cite{Norsett-1969,Cox-Matthews-2002,Calvo-Palencia-2006}. Since this method does not have internal stages, it is relatively easier for both implementation and analysis.
}

The rest of this article is organized as follows. In sections \ref{section:linear} and \ref{sec:AC}, we present the cut-off exponential lumped mass method for linear parabolic equations and the Allen--Cahn equation, respectively, and present error estimates for the methods. In section \ref{section:numerical}, we present numerical results to show the accuracy of the proposed method, and illustrate the effectiveness of the method in solving the Allen--Cahn equation with different nonlinear potentials in comparison with other MPP methods.

\section{Linear parabolic equations}\label{section:linear}

In this section, we consider the following initial-boundary value problem of a linear parabolic equation:
\begin{equation}\label{eqn:linear}
    \left\{\begin{aligned} \partial_t  u -  \Delta u&= f &&
    \mbox{in}\,\,\,\Omega\times(0,T],  \\
\partial_n u &= 0 && \mbox{on}\,\,\,\partial\Omega\times(0,T], \\
    u|_{t=0} &= u_0  &&\mbox{in}\,\,\, \Omega, \\
   \end{aligned} \right.
\end{equation}
where $\Omega\subset\mathbb{R}^d$ is a $d$-dimensional rectangular domain with boundary $\partial\Omega$. Equation \eqref{eqn:linear} satisfies the weak maximum principle, i.e., if $f$ is nonnegative then {   the solution of \eqref{eqn:linear} satisfies (cf. \cite[Chapter 8]{Larsson-Thomee-2003})}
\begin{align}\label{MP-linear}
\min_{x\in\overline\Omega} u(x,t) \ge  u_{\min}
\quad \mbox{for}\,\,\, t\in(0,T] ,
\end{align}
with
$$
u_{\min}=\min_{x\in\overline\Omega}u_0(x) .
$$
In this section, we propose a high-order fully discrete MMP method to preserve property \eqref{MP-linear} in the discrete level.

\subsection{Exponential cut-off methods: 1D case}$\,$
In this subsection, we consider the one-dimensional case $\Omega=[a,b]$. The extension to multi-dimensional cases is presented in subsection \ref{section:exp-LM-mD}.

We denote by $a=x_0<x_{1}<\dots<x_{Mr}=b$ a partition of the domain with a uniform mesh size $h=x_{ir} - x_{(i-1)r} = (b-a)/M$,
and denote by $S_h^r$ the finite element space of degree $r\ge 1$, i.e.,
$$
S_h^r=\{v\in H^1(\Omega): v|_{I_i}\in P_r,\,\,\, i=1,\dots,M\} ,
$$
where $I_i=[x_{(i-1)r},x_{ir}]$ and $P_r$ denotes the space of polynomials of degree $\le r$.

Let $x_{(i-1)r+j}$ and { $\omega _{j}$}, $j=0,\dots,r$, be the quadrature
points and weights of the $(r+1)$-point Gauss--Lobatto quadrature on the subinterval $I_i$, and denote
\begin{align}\label{eqn:weight}
w_{(i-1)r+j} =
\left\{
\begin{aligned}
&\omega_j &&\mbox{for}\,\,\, 1\le j\le r-1,\\
&2\omega_j &&\mbox{for}\,\,\, j=0, r.
\end{aligned}
\right.
\end{align}
Then  we consider the piecewise Gauss--Lobatto quadrature approximation of the inner product, i.e.,
$$
(f,g)_h
:=  \sum_{j=0}^{Mr} w_j f(x_j) g(x_j)  .
$$
This discrete inner product induces a norm
$$
\|f_h\|_h=\sqrt{(f_h,f_h)_h} \quad\forall\, f_h\in S_h^r.
$$

For $(Mr+1)$-dimensional vectors ${\bf u}$ and ${\bf v}$, we define the discrete $L^2$ inner product and discrete $L^2$ norm by
$$
({\bf u},{\bf v})_h=  {\bf M}{\bf u}\cdot {\bf v}
\quad\mbox{and}\quad
\|{\bf v}\|_{h}
= \sqrt{({\bf v},{\bf v})_h} \, ,
$$
where ${\bf M}$ is the $(Mr+1)\times (Mr+1)$ mass matrix, a diagonal matrix consisting of the weights of the piecewise Gauss--Lobatto quadrature corresponding to the quadrature points. Then we have the following lemma for norm equivalence. The proof follows directly from the positivity of
{   Gauss--Lobatto} quadrature weights \cite[p. 426]{Quarteroni:2000},
and the equivalence of finite dimensional norms.

\begin{lemma}\label{lem:norm-equivalence}
If $v_h\in S_h^r$ and ${\bf v}$ is the $(Mr+1)$-dimensional vector consisting of the nodal values of $v_h$,
then the following three norms are equivalent in sense that
\begin{align*}
\|v_h\|_h = \|{\bf v}\|_h \sim \|v_h\|_{L^2(\Omega)} .
\end{align*}
\end{lemma}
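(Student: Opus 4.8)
The plan is to establish the two claims separately: the identity $\|v_h\|_h=\|{\bf v}\|_h$, and the nontrivial equivalence $\|v_h\|_h\sim\|v_h\|_{L^2(\Omega)}$. The first is immediate from the definitions once one notes that the $(r+1)$-point Gauss--Lobatto rule includes both endpoints of each subinterval, so its global quadrature points $x_0,\dots,x_{Mr}$ are exactly the Lagrange nodal points of $S_h^r$ that carry the (lumped) mass matrix. Thus $v_h(x_j)$ is the $j$th entry of ${\bf v}$, ${\bf M}=\mathrm{diag}(w_0,\dots,w_{Mr})$, and
\[
\|{\bf v}\|_h^2={\bf M}{\bf v}\cdot{\bf v}=\sum_{j=0}^{Mr}w_j\,v_h(x_j)^2=(v_h,v_h)_h=\|v_h\|_h^2 .
\]
(It is worth observing in passing that \eqref{eqn:weight} is consistently defined at an interior interface node $x_{ir}$, which has the two local representations $j=r$ on $I_i$ and $j=0$ on $I_{i+1}$, precisely because the two endpoint Gauss--Lobatto weights coincide, $\omega_0=\omega_r$.)

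For the $L^2$ equivalence I would pass to a reference element. Let $\widehat I=[0,1]$ carry the $(r+1)$-point Gauss--Lobatto nodes $\widehat\xi_0,\dots,\widehat\xi_r$ and weights $\widehat\omega_0,\dots,\widehat\omega_r$, all strictly positive by \cite[p.~426]{Quarteroni:2000}. On the $(r+1)$-dimensional space $P_r$, the functional $q(p):=\sum_{j=0}^r\widehat\omega_j\,p(\widehat\xi_j)^2$ is a seminorm, and $q(p)=0$ forces $p$ to vanish at the $r+1$ distinct points $\widehat\xi_j$, hence $p\equiv0$; so $p\mapsto q(p)^{1/2}$ is a norm on $P_r$. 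Because all norms on a finite-dimensional space are equivalent, there are constants $0<c_r\le C_r$ depending only on $r$ with
\[
c_r\,\|p\|_{L^2(\widehat I)}^2\le\sum_{j=0}^r\widehat\omega_j\,p(\widehat\xi_j)^2\le C_r\,\|p\|_{L^2(\widehat I)}^2\qquad\forall\,p\in P_r .
\]

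I would then transport this to each $I_i$ through the affine bijection $\Phi_i:\widehat I\to I_i$, $\widehat\xi\mapsto x_{(i-1)r}+h\widehat\xi$. With $\widehat v_h:=v_h\circ\Phi_i\in P_r$ one has $\omega_j=h\widehat\omega_j$ and $\int_{I_i}v_h^2\,\d x=h\,\|\widehat v_h\|_{L^2(\widehat I)}^2$, so the Jacobian factor $h$ cancels from both sides and, uniformly in $i$ and $h$,
\[
c_r\int_{I_i}v_h^2\,\d x\;\le\;Q_i(v_h):=\sum_{j=0}^r\omega_j\,v_h(x_{(i-1)r+j})^2\;\le\;C_r\int_{I_i}v_h^2\,\d x .
\]
Summing over $i=1,\dots,M$ gives $\sum_i Q_i(v_h)\sim\|v_h\|_{L^2(\Omega)}^2$. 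It remains to compare $\sum_i Q_i(v_h)$ with $\|v_h\|_h^2=\sum_j w_j\,v_h(x_j)^2$: regrouping $\sum_i Q_i(v_h)$ by global node yields the weight $\omega_j$ at nodes interior to an element, $\omega_r+\omega_0=2\omega_0$ at each interior interface $x_{ir}$ ($1\le i\le M-1$), and $\omega_0$, $\omega_r$ at $x_0$, $x_{Mr}$, which by \eqref{eqn:weight} equal $w_j$ everywhere except at the two boundary nodes, where they equal $\tfrac12 w_j$. Hence $\sum_i Q_i(v_h)\le\|v_h\|_h^2\le2\sum_i Q_i(v_h)$, and chaining the estimates gives $\|v_h\|_h\sim\|v_h\|_{L^2(\Omega)}$.

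I do not expect any genuine obstacle. The only things that need attention are that the equivalence constants stay independent of $h$ and of the number $M$ of subintervals --- which is exactly what the reference-element rescaling delivers, since the Jacobian factor cancels on both sides --- and the mildly tedious accounting of the quadrature weights at element interfaces and at $\partial\Omega$ forced by the doubling convention in \eqref{eqn:weight}.
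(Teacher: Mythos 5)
Your proof is correct and follows essentially the same route as the paper: the paper disposes of this lemma in one sentence, appealing to the positivity of the Gauss--Lobatto weights and the equivalence of norms on finite-dimensional spaces, and your reference-element rescaling together with the bookkeeping of the interface and boundary weights is precisely the standard way to make that remark rigorous with constants independent of $h$ and $M$. No gaps.
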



The solution of \eqref{eqn:linear} satisfies the weak form
\begin{align}\label{weak}
(\partial_t u , v) + (\partial_x u , \partial_x v) = (f , v) \quad \forall\, v\in H^1\II,
\end{align}
which implies
\begin{align}\label{weak-numer-int}
(\partial_t u, v_h)_h + (\partial_x \Pi_h u, \partial_x v_h)  = (f, v_h)_h + \mathcal{E}(v_h) \quad\forall\, v_h\in S_h^r,
\end{align}
where $\Pi_h:C(\overline\Omega)\rightarrow S_h^r$ is the Lagrange interpolation operator, and $\mathcal{E}(v_h)$ includes both quadrature and interpolation errors. Since the $(r+1)$-point Gauss--Lobatto quadrature on each subinterval $I_i$ is accurate for polynomials of degree $2r-1$ on $I_i$ \cite[p. 425]{Quarteroni:2000}, employing the Bramble--Hilbert lemma, we conclude that
\begin{align}\label{GL-quadrature-error}
|\mathcal{E}(v_h)|
&=|(\partial_t u, v_h)_h-(\partial_t u, v_h)
+(f, v_h)-(f, v_h)_h
+ (\partial_x (\Pi_h u-u), \partial_x {   v_h}) |\notag \\
&\le
c\sum_{i=1}^{M}h^{2r}\|\partial_t u \, v_h\|_{W^{2r,1}(I_i)}
+c\sum_{i=1}^{M}h^{2r}\|fv_h\|_{W^{2r,1}(I_i)}
+ |(\partial_x(u-\Pi_hu),\partial_xv_h)| \notag \\
&\le
ch^{2r} \sum_{i=1}^{M} \sum_{j=0}^r\|\partial_t u\|_{H^{2r-j}(I_i)}\|v_h\|_{H^j(I_i)}
+ c\sum_{i=1}^{M}h^{2r} \sum_{j=0}^r \|f\|_{H^{2r-j}(I_i)}\|v_h\|_{H^j(I_i)} \notag \\
&\quad\,
+ch^{r}\|u\|_{H^{r+1}(\Omega)}\|\partial_x v_h\|_{L^2(\Omega)}
\notag \\
&\le
ch^{2r} \sum_{i=1}^{M}(\|\partial_t u\|_{H^{2r}(I_i)}+\|f\|_{H^{2r}(I_i)})\|v_h\|_{H^r(I_i)}
+ch^{r}\|u\|_{H^{r+1}(\Omega)}\|\partial_x v_h\|_{L^2(\Omega)} \notag \\
&\le
C(\|\partial_t u\|_{H^{2r}(\Omega)}+\|u\|_{H^{r+1}(\Omega)}+\|f\|_{H^{2r}(\Omega)})
(\|v_h\|_h +\|\partial_x v_h\|_{L^2(\Omega)}) h^r ,
\end{align}
where the last inequality follows from the inverse inequality of the finite element space and the norm equivalence, cf. Lemma \ref{lem:norm-equivalence}.

The spatially semi-discrete Gauss--Lobatto lumped mass method is to find $u_h\in S_h^r$ satisfying the following equation:
\begin{align}\label{semi-discrete-GL}
(\partial_t u_h , v_h)_h
+ (\partial_x u_h, \partial_x v_h) = (f, v_h)_h
\quad v_h\in S_h^r .
\end{align}
This can be furthermore written into a matrix-vector form:
\begin{align}\label{semi-discrete-GL-matrix}
{\bf M}\dot {\bf u} - {\bf A} {\bf u}
={\bf M}{\bf f}
 \quad
\text{or equivalently}
\quad
\dot {\bf u} - {\bf M}^{-1}{\bf A} {\bf u}
={\bf f}
\end{align}
where ${\bf u}$ is a $(Mr+1)$-dimensional vector consisting of the nodal values of $u_h$, and $\dot{\bf u}$ denotes the time derivative of the vector ${\bf u}$; ${\bf M}$ and ${\bf A}$ are the mass and stiffness matrices, respectively.

For the time discretization of \eqref{semi-discrete-GL} or \eqref{semi-discrete-GL-matrix}, we consider the following {   exponential integrator
\cite{Hochbruck-Ostermann-2010, MinchevWright:2005}}
\begin{align}\label{exp-GL}
\hat{\bf u}^n = e^{\tau {\bf M}^{-1}{\bf A}}{\bf u}^{n-1} + \int_{t_{n-1}}^{t_n}e^{(t_n-s) {\bf M}^{-1}{\bf A}} I_\tau^{(k-1)}{\bf f}(s)  \,ds .
\end{align}
{   Here, we approximate the function $f(s)$, on $[t_{n-1},t_n]$, by the extrapolation polynomial
\begin{align}\label{eqn:extrapolation}
  I_\tau^{(k-1)}{\bf f}(s) =    \sum_{j=1}^k L_j(s) {\bf f} (t_{n-j}),
  \end{align}
where $L_j(s)$ is the Lagrange basis polynomials of degree $k-1$ in time, satisfying
$$
L_j(t_{n-i}) = \delta_{ij}, \quad i,j=1,\dots,k.
$$
If $f$ is smooth in $[0,T]$ then it can be smoothly extended to $t\le 0$ to define $I_\tau^{(k-1)}{\bf f}(s)$ for $s\in[0,t_{k-1}]$.}
Since $I_\tau^{(k-1)}{\bf f}$ is a polynomial in time, the integral in \eqref{exp-GL} can be evaluated exactly (up to errors in computing the exponential of matrices).

Then we truncate the nodal vector $\hat{\bf u}^n$ by setting
\begin{equation}\label{truncation-GL}
\begin{split}
{\bf u}^n & =  \max \big(\hat{\bf u}^n, u_{\min} {\bf 1} \big) ,
\end{split}
\end{equation}
where ${\bf 1}$ denotes the $(Mr+1)$-dimensional vector with element $1$ in each component.

For a given $u^{n-1}_h\in S_h^r$, the nodal vector ${\bf u}^{n-1}$ is uniquely determined. Then the nodal vector $\hat{\bf u}^n$ given by \eqref{exp-GL} consists of the nodal values of the solution $\hat u_h(t_n)$ obtained from the initial-value problem
\begin{align}\label{fully-discrete-GL}
\left\{\begin{aligned}
&(\partial_t \hat u_h , v_h)_h
+ (\partial_x \hat u_h, \partial_x v_h) = (I_\tau^{(k-1)}f, v_h)_h
\quad v_h\in S_h^r ,\,\,\, t\in(t_{n-1},t_n] \\
&\hat u_h(t_{n-1})=u^{n-1}_h .
\end{aligned}\right.
\end{align}
The nodal vector obtained from \eqref{truncation-GL} can be used to construct a piecewise polynomial $u^{n}_h\in S_h^r$.

The accuracy of the fully discrete scheme \eqref{exp-GL}-\eqref{truncation-GL} is presented in the following theorem.

\begin{theorem}\label{thm:exp-GL}
Let $u^{n}_h\in S_h^r$ be the piecewise polynomial corresponding to the nodal vector ${\bf u}^n$ obtained from the fully discrete scheme \eqref{exp-GL}-\eqref{truncation-GL}. Then
$$
 {u_h^n}(x_j)\ge u_{\min} , \quad j=0,1,\dots,Mr ,
$$
and
\begin{align*}
\max_{1\le n\le N} \| u(t_n) - u^n_h \|_{L^2(\Omega)} \le c (\tau^k+h^r) + c\| u(t_{0}) - u^{0}_h \|_{L^2(\Omega)} ,
\end{align*}
provided that {   $u\in C^1([0,T]; H^{2r}(\Omega))$ and $f \in C^k([0,T];L^2(\Omega))\cap C([0,T];H^{2r}(\Omega))$. }
\end{theorem}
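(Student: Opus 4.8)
The plan is to split the error into three parts by introducing two intermediate quantities: the spatially semidiscrete solution $u_h(t)$ solving \eqref{semi-discrete-GL}, and the fully discrete solution $\tilde u_h^n$ obtained by running the exponential integrator \eqref{exp-GL} \emph{without} the cut-off operation, started from the cut-off iterate $u_h^{n-1}$. Then
$$
\|u(t_n)-u_h^n\|_{L^2} \le \|u(t_n)-u_h(t_n)\|_{L^2} + \|u_h(t_n)-\hat u_h^n\|_{L^2} + \|\hat u_h^n - u_h^n\|_{L^2},
$$
and each term is handled separately. The nodal inequality $u_h^n(x_j)\ge u_{\min}$ is immediate from the definition \eqref{truncation-GL} of the truncation, since $f\ge 0$ is not even needed: the cut-off enforces it by construction.

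First I would treat the spatial error $\|u(t_n)-u_h(t_n)\|_{L^2}$ for the lumped mass Gauss--Lobatto semidiscretization. Subtracting \eqref{semi-discrete-GL} from the perturbed weak form \eqref{weak-numer-int}, the consistency defect is exactly $\mathcal E(v_h)$, which by \eqref{GL-quadrature-error} is $O(h^r)$ in the appropriate dual norm under the stated regularity $u\in C^1([0,T];H^{2r})$, $f\in C([0,T];H^{2r})$. A standard energy argument (testing the error equation with the error, using the norm equivalence of Lemma \ref{lem:norm-equivalence} and a Ritz-type projection or the $\mathcal E$-estimate directly, plus Gronwall) then gives $\|u(t_n)-u_h(t_n)\|_{L^2}\le c h^r + c\|u_0-u_h(0)\|_{L^2}$. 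This step is essentially classical lumped mass FEM analysis adapted to the higher-order Gauss--Lobatto element, so I would not belabor it.

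Next comes the temporal error. For the scheme \emph{without} cut-off, $\hat u_h^n$ solves the exponential-integrator recursion exactly, and the key observation is that the exponential multistep method with $k$th-order backward extrapolation \eqref{eqn:extrapolation} applied to the linear semigroup problem \eqref{semi-discrete-GL-matrix} is a classical scheme with local truncation error $O(\tau^{k+1})$ coming solely from replacing $f(s)$ by its Lagrange extrapolant $I_\tau^{(k-1)}f(s)$; since $e^{(t_n-s){\bf M}^{-1}{\bf A}}$ is a contraction on the discrete norm (the operator $-{\bf M}^{-1}{\bf A}$ being negative semidefinite with respect to $(\cdot,\cdot)_h$), the error propagates stably and one obtains $O(\tau^k)$ globally — again under $f\in C^k([0,T];L^2)$ for the extrapolation remainder. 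The genuinely new ingredient is the third term: the accumulated effect of the cut-offs. Here one uses the crucial \emph{nonexpansivity of the cut-off in the discrete norm}: the map $\hat{\bf u}\mapsto\max(\hat{\bf u},u_{\min}{\bf 1})$ is the (diagonal, hence $\bf M$-orthogonal) projection onto the convex set $\{{\bf v}:{\bf v}\ge u_{\min}{\bf 1}\}$, so it is a contraction in $\|\cdot\|_h$; and since the true solution satisfies $u(t_n)\ge u_{\min}$, interpolating gives a nodal vector lying in that convex set, which is fixed by the projection. Thus at each step the cut-off can only decrease the distance to (the nodal vector of) the semidiscrete-in-space reference solution, and the per-step error it introduces is controlled by the amount of constraint violation, which is itself bounded by the already-established consistency error $O(\tau^k+h^r)$. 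Combining via discrete Gronwall over $n=1,\dots,N$ yields the stated bound.

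The main obstacle is getting the bookkeeping in the third step right: one must set up the comparison so that the cut-off's contraction property is applied against a reference that genuinely satisfies the constraint at every nodal point (the interpolant of the PDE solution, or its Ritz/lumped-mass projection, \emph{provided} that projection is shown to be $\ge u_{\min}$ — if it is not, one instead compares directly with $\Pi_h u(t_n)$ and absorbs the mismatch into the $O(h^r)$ term). Making the telescoping clean — so that the cut-offs contribute no net growth beyond a Gronwall factor — while simultaneously threading the quadrature/interpolation error $\mathcal E$ through both the spatial and the temporal estimates, is where the care is needed; the exponential-integrator and lumped-mass pieces in isolation are standard.
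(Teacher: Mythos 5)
Your key mechanism is the right one and is exactly what the paper uses: the nodal cut-off is nonexpansive, and the reference against which to apply this must be the Lagrange interpolant $\Pi_h u(t_n)$, whose nodal values equal $u(x_j,t_n)\ge u_{\min}$ by the PDE maximum principle and are therefore fixed by the cut-off. You correctly flag that a Ritz or lumped-mass projection (or the semidiscrete solution $u_h(t)$ itself, for $r\ge 2$) need not satisfy the constraint and so cannot play this role. However, the three-term decomposition you lead with has a genuine flaw: treating $\|\hat u_h^n-u_h^n\|$ as a separate additive term and bounding it by ``the amount of constraint violation'' does not close. The constraint violation of $\hat u_h^n$ is bounded by the \emph{accumulated} error $\|\hat u_h^n-\Pi_h u(t_n)\|_h$, not by the one-step consistency defect $O(\tau^k+h^r)$; if you re-insert it through the triangle inequality you get a recursion of the form $\|e^n\|\le 2(1+c\tau)\|e^{n-1}\|+\dots$, and the factor $2$ compounds geometrically over $N$ steps. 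The whole point of the nonexpansivity is that it lets you write $\|\Pi_h u(t_n)-u_h^n\|_h\le\|\Pi_h u(t_n)-\hat u_h^n\|_h$ with \emph{no} additive cut-off term at all; once you add one back, the argument is lost.

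The paper's proof avoids your decomposition entirely. It works with the single error $e_h^n=\Pi_h u(t_n)-u_h^n$, uses the pointwise contraction $|e_h^n(x_j)|\le|u(x_j,t_n)-\hat u_h(x_j,t_n)|$ to pass to $\hat e_h=\Pi_h u-\hat u_h$, and then runs one continuous-in-time energy argument for $\hat e_h$ on each subinterval $(t_{n-1},t_n]$, with the quadrature, interpolation, and extrapolation defects all lumped into the consistency functional $\mathcal{E}(v_h)$ together with $(f-I_\tau^{(k-1)}f,v_h)_h$; a Gronwall step per subinterval gives $\|\hat e_h(t_n)\|_h^2\le c\tau(\tau^k+h^r)^2+(1+c\tau)\|\hat e_h(t_{n-1})\|_h^2$, which combined with the contraction iterates cleanly. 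In particular it never introduces the spatially semidiscrete solution $u_h(t)$ or a no-cut-off trajectory as intermediate quantities, nor does it invoke semigroup contractivity of $e^{-t{\bf M}^{-1}{\bf A}}$ — the stability comes from testing the error equation with $\hat e_h$ in the $(\cdot,\cdot)_h$ inner product. If you restructure your argument to use the contraction directly against $\Pi_h u(t_n)$ (dropping the third term of your splitting), you recover essentially the paper's proof.
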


\begin{proof}
Clearly, \eqref{truncation-GL} guarantees that $ {u_h^n}(x_j)\ge u_{\min}$ for $j=0,1,\dots,Mr$.
To prove the error estimate,  {we denote by $e_h^n = \Pi_h u(t_n) - u_h^n$ the
difference between the numerical solution and the Lagrange interpolation of the exact solution.
At each node $x_j$ there holds
\begin{align*}
|e_h^n(x_j)|
&= | \max \big( u(x_j, t_n) , u_{\min}) -  \max \big(\hat{u}_h(x_j,t_n), u_{\min} \big) |\\
&\le |u(x_j, t_n)  - \hat{u}_h(x_j,t_n)  | ,
\end{align*}
where the piecewise-defined function $\hat{u}_h$ is given by  \eqref{fully-discrete-GL}.
Then for $\hat e_h=\Pi_hu-\hat u_h$ there holds
\begin{align}\label{truncation-bfe}
\| e_h^n  \|_h
\le  \|  \hat e_h(t_n) \|_h \quad\mbox{and}\quad
\| e_h^{n-1} \|_h = \|\hat e_h(t_{n-1})\|_h
\end{align}}

Besides, the difference between \eqref{weak-numer-int} and \eqref{fully-discrete-GL}  yields the error equation
{   for $t\in (t_{n-1},t_n]$}
\begin{align}\label{error-eq-GL}
\left\{\begin{aligned}
&(\partial_t \hat e_h , v_h)_h
+ (\partial_x \hat e_h, \partial_x v_h) = (f-I_\tau^{(k-1)}f, v_h)_h
+\mathcal{E}(v_h)  \quad\forall\, v_h\in S_h^r , \\
&\hat e_h(t_{n-1})= \Pi_hu(t_{n-1})-u^{n-1}_h .
\end{aligned}\right.
\end{align}

Substituting $v_h=\hat e_h$ into the equation above, we obtain
\begin{align*}
& \frac{\d}{\d t}\bigg( \frac12 \|\hat e_h\|_h^2  \bigg)
+ \|\partial_x \hat e_h\|_{L^2(\Omega)}^2 \\
&= (f-I_\tau^{(k-1)}f, \hat e_h)_h
+\mathcal{E}(\hat e_h)  \\
&\le
\|f-I_\tau^{(k-1)}f\|_h \|\hat e_h\|_h
+ch^r(\|\hat e_h\|_h+\|\partial_x\hat e_h\|_{L^2(\Omega)})  \\
&\le
c(\tau^k + h^r) (\|\hat e_h\|_h+\|\partial_x\hat e_h\|_{L^2(\Omega)})  \\
&\le
c(\tau^k + h^r)^2 + \frac12\|\hat e_h\|_h^2 + \frac12\|\partial_x\hat e_h\|_{L^2(\Omega)}^2 ,
\end{align*}
which furthermore reduces to
\begin{align*}
\frac{\d}{\d t}\bigg( \frac12 \|\hat e_h\|_h^2  \bigg)
&\le
c(\tau^k + h^r)^2 + \frac12\|\hat e_h\|_h^2 .
\end{align*}
By applying Gronwall's inequality, we obtain
\begin{align*}
\|\hat e_h(t_n)\|_h^2
&\le e^{\tau} c\tau (\tau^k + h^r)^2 + e^{\tau} \|\hat e_h(t_{n-1})\|_h^2 \\
&\le c\tau (\tau^k + h^r)^2 + (1+ c\tau) \|\hat e_h(t_{n-1})\|_h^2.
\end{align*}
 {Then, 
using \eqref{truncation-bfe}, we have
\begin{align*}
\|e_h^n\|_h^2
&\le c\tau (\tau^k + h^r)^2 + (1+c\tau) \|e_h^n\|_h^2.
\end{align*}
Iterating the inequality above for $n=1,\dots,N$, we obtain
\begin{align*}
\max_{1\le n\le N}\|e_h^n\|_h^2
&\le e^{cT}\|e_h^{0}\|_h^2
+e^{cT}  c(\tau^k + h^r)^2 .
\end{align*}}
This and the norm equivalence in Lemma \eqref{lem:norm-equivalence} imply
\begin{align*}
&\max_{1\le n\le N} \| \Pi_h u(t_n) - u^n_h \|_{L^2(\Omega)} \\
&\le c (\tau^k+h^r) + c\| \Pi_h u(t_{0}) - u^{0}_h \|_{L^2(\Omega)} \\
&\le c (\tau^k+h^r) + c\| \Pi_h u(t_{0}) - u(t_{0}) \|_{L^2(\Omega)}
+ c\| u(t_{0}) - u^{0}_h \|_{L^2(\Omega)} \\
&\le c (\tau^k+h^r)
+ c\| u(t_{0}) - u^{0}_h \|_{L^2(\Omega)} .
\end{align*}
This proves the desired result in Theorem \ref{thm:exp-GL}.
\hfill\end{proof}

\subsection{Extension to multi-dimensional domains} \label{section:exp-LM-mD}
In the section, we describe the implementation of the proposed numerical method in a
multi-dimensional rectangular domain $\Omega = (a, b)^d \subset\R^d$, with $d\ge 2$.

For all $i=1,\dots, d$, we denote by $a=x_{0}<x_{1}<\dots<x_{M r}=b$
a partition of the interval $[a, b]$ with a uniform mesh size $\displaystyle h=x_{ir}-x_{(i-1)r} = (b-a)/M$
for all $i=1,\dots,M$. Using the same setting in one dimension, we let $x_{(i-1)r+j}$ and $w_{j}$,
$j=0,\dots,r$, be the quadrature points and weights of the $(r+1)$-point
Gauss--Lobatto quadrature on the subinterval $[x_{(i-1)r}, x_{ir}]$.
Moreover, we define  the global quadrature weights $w_i$, with $i=0,\ldots, Mr$, by \eqref{eqn:weight}.

The domain $\Omega$ is now separated into $M^d$ subrectangles by all grid points
$(x_{j_1 r},  \ldots, x_{j_dr})$,
with $0 \le j_i \le M$ and $i=1,\dots, d$. We denote this partition by $\mathcal{K}$,
and  note that $\displaystyle h$ is the mesh size of the partition $\mathcal{K}$.
Then we apply the tensor-product Lagrange finite elements on the partition $\mathcal{K}$.

Let $Q_r$ be space of polynomials in the variables $x_1, \ldots, x_d$, with real coefficients and of degree at most $r$ in each variable, i.e.,
\begin{equation*}
Q_r = \Big\{   \sum_{0\le \beta_1,\beta_2,\ldots,\beta_d\le r} c_{\beta_1 \beta_2\ldots \beta_d} x_1^{\beta_1} \cdots x_d^{\beta_d},
\quad \text{with}~~ c_{\beta_1 \beta_2\ldots \beta_d} \in\mathbb{R} \Big\}.
\end{equation*}
The $H^1$-conforming tensor-product finite element space, denoted by $S_h^r$, is defined as
\begin{equation*}
 S_h^r = \{ v\in H^1(\Omega): v|_K \in Q_r \,\, \text{for all} \,\, K\in \mathcal{K} \}.
\end{equation*}

We apply the Gauss--Lobatto quadrature in each subrectangle to approximate of the inner product, i.e.,
$$
(f,g)_h :=    \sum_{j_1=0}^{M_1 r}\cdots \sum_{j_d=0}^{M_d r} w_{j_1}\cdots w_{j_d} f(x_{ j_1},  \ldots, x_{j_d}) g(x_{ j_1},  \ldots, x_{j_d}) .
$$
This discrete inner product induces a norm
$$
\|f_h\|_h=\sqrt{(f_h,f_h)_h} \quad\forall\, f_h\in S_h^r.
$$

Similarly as the one-dimensional case, the spatially semi-discrete Gauss--Lobatto lumped mass method is to find $u_h\in S_h^r$ satisfying the following equation:
\begin{align}\label{md-semi-discrete-GL}
(\partial_t u_h , v_h)_h
+ (\nabla u_h, \nabla v_h) = (f, v_h)_h
\quad v_h\in S_h^r .
\end{align}
This can be furthermore written into a matrix-vector form:
\begin{align}
{\bf M}\dot {\bf u} - {\bf A} {\bf u}
={\bf M}{\bf f}
\end{align}
or equivalently
\begin{align}\label{md-semi-discrete-GL-matrix}
\dot {\bf u} - {\bf M}^{-1}{\bf A} {\bf u}
={\bf f}
\end{align}
where ${\bf u}$ is a vector consisting of the nodal values of $u_h$, and $\dot{\bf u}$ denotes the time derivative of the vector ${\bf u}$; ${\bf M}$ and ${\bf A}$ are the mass and stiffness matrices in the semidiscrete scheme \eqref{md-semi-discrete-GL}, respectively.

Similarly as the one-dimensional case, time discretization of \eqref{md-semi-discrete-GL-matrix} can be done by using the following exponential integrator:
\begin{align}\label{md-exp-GL}
\hat{\bf u}^n = e^{\tau {\bf M}^{-1}{\bf A}}{\bf u}^{n-1} + \int_{t_{n-1}}^{t_n}e^{(t_n-s) {\bf M}^{-1}{\bf A}} I_\tau^{(k-1)}{\bf f}(s)  \,ds .
\end{align}
where {   $I_\tau^{(k-1)}{\bf f}$, defined by \eqref{eqn:extrapolation}, is the extrapolated polynomial approximation of ${\bf f}$ on the time interval $[t_{n-1},t_n]$.}
Then we truncate the nodal vector $\hat{\bf u}^n$ by setting
\begin{equation}\label{md-truncation-GL}
\begin{split}
{\bf u}^n & =  \max \big(\hat{\bf u}^n, u_{\min} {\bf 1} \big) ,
\end{split}
\end{equation}
where ${\bf 1}$ denotes the vector with element $1$ in each component.

The error estimate for the multi-dimensional problem is the same as that presented in Theorem \ref{thm:exp-GL}. The details are omitted.



\section{The Allen--Cahn equation}\label{sec:AC}
In this section, we consider the cut-off exponential lumped mass method for the Allen--Cahn equation
\begin{equation}\label{eqn:AC}
    \left\{\begin{aligned} \partial_t  u  - \Delta u  &= f(u) &&  \mbox{in}\,\,\,\Omega\times(0,T],\\
\partial_nu &=0 && \mbox{on}\,\,\,\partial\Omega\times(0,T] , \\
    u|_{t=0} &= u_0 && \mbox{in}\,\,\,\in \Omega,
   \end{aligned} \right.
\end{equation}
where $f(u)=-F'(u)$ with a double-well potential $F$ that has two wells at $\pm\alpha$.
In this case, the solution of \eqref{eqn:AC}
satisfies the folowing {   maximum principle \cite{DuJuLiQiao:2020}}
\begin{align}\label{eqn:max-AC}
 \max_{x\in\overline\Omega} |u_0(x)|\le \alpha \quad \Rightarrow \quad |u(x,t)| \le \alpha,\qquad \text{for all}~~ (x,t) \in  \overline\Omega\times [0,T].
\end{align}
Two examples of such double-well potentials are given below.

\begin{example}[The Ginzburg--Landau free energy \cite{Allen-Cahn-1979}]\label{AC:1}
{\upshape
The Ginzburg--Landau double-well potential is
\begin{align}\label{eqn:F1}
F(u) = \frac1{4\epsilon^2}(1-u^2)^2 .
\end{align}
In this case, the right-hand side of \eqref{eqn:AC} is given by
\begin{align} \label{eqn:F1-f}
  f(u) = \frac{1}{\epsilon^2}(u-u^3) .
\end{align}
The solution of \eqref{eqn:AC}
satisfies the maximum principle \eqref{eqn:max-AC} with $\alpha=1$.
}
\end{example}

\begin{example}[The Flory--Huggins free energy \cite{Flory-1941,Huggins-1941}]\label{AC:2}
{\upshape
The logarithmic Flory--Huggins free energy is given by
\begin{align} \label{eqn:F2}
F(u) = \frac{\theta}{2\epsilon^2}[(1+u)\ln(1+u) + (1-u)\ln(1-u)] - \frac{\theta_c}{2\epsilon^2}u^2,
\end{align}
where $\theta$ and $\theta_c$ are two positive numbers satisfying $\theta<\theta_c$. In this case, the right-hand side of \eqref{eqn:AC} is given by
\begin{align} \label{eqn:F2-f}
  f(u) = -F'(u) =\frac{\theta_c}{\epsilon^2} u - \frac{\theta}{2\epsilon^2} \ln\Big(\frac{1+u}{1-u}\Big)   . 
\end{align}
Let $\pm\alpha$ be the two roots of $f(u)$, determined by
\begin{align} \label{eqn:beta}
  \frac{1}{2\alpha} \ln\bigg(\frac{1+\alpha}{1-\alpha}\bigg) = \frac{\theta_c}{\theta},
\quad\mbox{with}\,\,\,\alpha>0 .
\end{align}%
Then $\alpha\in(0,1)$ and the maximum principle \eqref{eqn:max-AC} holds.
}
\end{example}

In the next two subsections, we consider semi-discretization in time and full discretization, respectively.

\subsection{Semidiscrete multi-step exponential cut-off method}
The solution of \eqref{eqn:AC} satisfies
\begin{align}\label{eqn:sol-AC}
u(t_n) =  e^{\tau\Delta}  u(t_{n-1}) + \int_{t_{n-1}}^{t_n} e^{(t_n-s)\Delta} f(u(s)) \,ds ,
\end{align}
where $e^{t\Delta}$, $t\ge 0$, denotes the semigroup generated by the Laplacian operator, satisfying the following estimate
{    \cite[p. 117]{Larsson-Thomee-2003}}:
\begin{align}\label{contraction-e}
\|	e^{t\Delta} v\|_{L^2(\Omega)} \le \|v\|_{L^2(\Omega)} \quad\forall\, v\in L^2(\Omega).
\end{align}

{   Next, we introduce the time stepping scheme. The basic idea is the same as that of linear models in \eqref{exp-GL}.
On the time interval $[t_{n-1},t_n]$, we approximate $f(u(s))$ by the extrapolation polynomial
$$
 \sum_{j=1}^k L_j(s) f(u^{n-j}) .
$$
where $L_j(s)$ is the Lagrange basis polynomial defined in \eqref{eqn:extrapolation}.}
Correspondingly, for $n\ge k$ the semi-discrete extrapolated exponential integrator for \eqref{eqn:sol-AC} is given by
\begin{align}\label{AC-exp}
\hat u^n =  e^{\tau\Delta}  u^{n-1} + \int_{t_{n-1}}^{t_n} e^{(t_n-s)\Delta}
\sum_{j=1}^k L_j(s) f(u^{n-j})  \,ds ,
\end{align}
together with the truncation
\begin{align}\label{AC-trunc}
u^n =  \min(\max(\hat u^n,-\alpha),\alpha) .
\end{align}
Due to the truncation \eqref{AC-trunc}, the semi-discrete solution obtained from \eqref{AC-exp}-\eqref{AC-trunc} satisfies
$$
|u^n(x)|\le \alpha \quad \forall\, x\in \Omega .
$$
The accuracy of this truncated semi-discrete method is guaranteed by the following theorem.

\begin{theorem}\label{thm:err-AC}
Assume that $|u_0|\le \alpha$ and the maximum principle \eqref{eqn:max-AC} holds, and assume that the starting values $u^n$, $n=0,\dots,k-1$, are given and satisfies
$$
|u^n|\le \alpha \quad n=0,\dots,k-1.
$$
Then the semi-discrete solution given by \eqref{AC-exp}-\eqref{AC-trunc} satisfies
\begin{align}\label{AC-semi-discrete-max-pr}
|u^n|\le \alpha \quad n=k,\dots,N,
\end{align}
and
\begin{align}\label{eqn:err-AC}
  \| u(t_n) - u^n   \|_{L^2(\Omega)} \le  c \tau^k + c\sum_{n=0}^{k-1}\| u(t_n) - u^n   \|_{L^2(\Omega)} ,
\end{align}
provided that {   $f$ is locally Lipschitz continuous and $f(u) \in C^{k}([0,T];L^2\II)$.}
\end{theorem}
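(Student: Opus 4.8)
The plan is to separate the two claims. The discrete maximum principle \eqref{AC-semi-discrete-max-pr} is immediate: the cut-off \eqref{AC-trunc} defines $u^n$ as the pointwise projection of $\hat u^n$ onto $[-\alpha,\alpha]$, so $|u^n|\le\alpha$ for every $n\ge k$ regardless of $\hat u^n$, and together with the hypothesis on the starting values this gives $|u^m|\le\alpha$ for all $0\le m\le N$. The real work is the error bound \eqref{eqn:err-AC}, which I would obtain by a perturbation argument based on the variation-of-constants representation \eqref{eqn:sol-AC}, combined with two elementary facts about the cut-off map $\chi(v):=\min(\max(v,-\alpha),\alpha)$: it is a composition of $1$-Lipschitz maps, hence $\|\chi(v)-\chi(w)\|_{L^2(\Omega)}\le\|v-w\|_{L^2(\Omega)}$, and $\chi(u(t_n))=u(t_n)$ by the exact maximum principle \eqref{eqn:max-AC}.

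Writing $e^n:=\|u(t_n)-u^n\|_{L^2(\Omega)}$ and $\hat e^n:=\|u(t_n)-\hat u^n\|_{L^2(\Omega)}$, the two facts above give $e^n=\|\chi(u(t_n))-\chi(\hat u^n)\|_{L^2(\Omega)}\le\hat e^n$, so it suffices to control $\hat e^n$. Subtracting \eqref{AC-exp} from \eqref{eqn:sol-AC} and splitting the integrand $f(u(s))-\sum_{j=1}^k L_j(s)f(u^{n-j})$ into the extrapolation residual $f(u(s))-\sum_{j=1}^k L_j(s)f(u(t_{n-j}))$ plus $\sum_{j=1}^k L_j(s)\big(f(u(t_{n-j}))-f(u^{n-j})\big)$, I would bound the first piece by the Lagrange remainder formula in the Bochner sense: since $s\mapsto f(u(s))\in C^k([0,T];L^2(\Omega))$ and $s\in[t_{n-1},t_n]$ lies within distance $k\tau$ of every node $t_{n-j}$, its $L^2(\Omega)$ norm is $\le c\tau^k$, so after applying $e^{(t_n-s)\Delta}$ (contraction \eqref{contraction-e}) and integrating over $[t_{n-1},t_n]$ it contributes $c\tau^{k+1}$. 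For the second piece I would use that $u(t_{n-j})$ and $u^{n-j}$ both lie in the compact set $[-\alpha,\alpha]$, on which the locally Lipschitz $f$ has a genuine Lipschitz constant $L$, and that $|L_j(s)|$ is bounded on $[t_{n-1},t_n]$ by a constant depending only on $k$; this piece is then $\le c\sum_{j=1}^k e^{n-j}$ in $L^2(\Omega)$ and contributes $c\tau\sum_{j=1}^k e^{n-j}$ after integration. Adding the homogeneous term $\|e^{\tau\Delta}(u(t_{n-1})-u^{n-1})\|_{L^2(\Omega)}\le e^{n-1}$ yields the recursion $e^n\le\hat e^n\le e^{n-1}+c\tau\sum_{j=1}^k e^{n-j}+c\tau^{k+1}$ for $n\ge k$.

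To finish, set $E^n:=\max_{0\le m\le n}e^m$; the recursion gives $E^n\le(1+c\tau)E^{n-1}+c\tau^{k+1}$ for $n\ge k$, and discrete Gronwall then yields $E^N\le e^{cT}(E^{k-1}+cN\tau^{k+1})\le c\tau^k+c\sum_{m=0}^{k-1}\|u(t_m)-u^m\|_{L^2(\Omega)}$, which is \eqref{eqn:err-AC}. I expect the only genuinely delicate point to be the bookkeeping that makes the merely local Lipschitz hypothesis on $f$ usable: it is the cut-off that keeps every $u^m$ inside the compact interval $[-\alpha,\alpha]$, while it is the exact maximum principle that prevents the cut-off from inflating the error, so both properties must be invoked in tandem. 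A minor additional care is needed for the Lebesgue-type constant of one-step-ahead Lagrange extrapolation and for the vector-valued form of the interpolation remainder, but these are standard.
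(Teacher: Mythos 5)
Your proposal is correct and follows essentially the same route as the paper's proof: the contractivity of the cut-off combined with $\chi(u(t_n))=u(t_n)$ from the exact maximum principle, the variation-of-constants comparison of \eqref{eqn:sol-AC} and \eqref{AC-exp} split into the extrapolation residual (bounded by $c\tau^k$ via $f(u)\in C^k([0,T];L^2(\Omega))$) and the Lipschitz difference (using that the cut-off confines all iterates to $[-\alpha,\alpha]$), the semigroup contraction \eqref{contraction-e}, and a Gronwall argument. The only cosmetic difference is that you close the recursion with a running maximum $E^n$ while the paper sums the inequality over $n$ before applying Gronwall; both are standard and equivalent.
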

\begin{proof}
The cut-off operation \eqref{AC-trunc} guarantees \eqref{AC-semi-discrete-max-pr}. It suffices to prove the error estimate \eqref{eqn:err-AC}.

Let $e^n =  u(t_n) - u^n$. Then, since the cut-off operation is contractive, we have
\begin{align} \label{Ac-en-1}
 \|e^n \|_{L^2(\Omega)}
&= \|  \min(\max(u(t_n),-\alpha),\alpha)
 -\min(\max(\hat u^n,-\alpha),\alpha)  \|_{L^2(\Omega)} \notag \\
&\le   \|    u(t_n)  -  \hat u^n  \|_{L^2(\Omega)} .
\end{align}
Since $|u^n|\le\alpha$ and $f$ is locally Lipschitz continuous, it follows that
$$
\|f(u(t_{n-j})) - f(u^{n-j})\|_{L^2(\Omega)} \le c \|e^{n-j}\|_{L^2(\Omega)} .
$$
Hence, the difference between \eqref{eqn:sol-AC} and \eqref{AC-exp} yields
\begin{align}\label{AC-u-hat-u}
\|u(t_n)  -  \hat u^n\|_{L^2(\Omega)}
&=  \bigg\| e^{\tau\Delta}  e^{n-1}
+ \int_{t_{n-1}}^{t_n} e^{(t_n-s)\Delta}
[f(u(s))- \sum_{j=1}^k L_j(s) f(u(t_{n-j}))]\d s \notag \\
&\qquad
+\int_{t_{n-1}}^{t_n} e^{(t_n-s)\Delta}  \sum_{j=1}^k L_j(s) [f(u(t_{n-j})) - f(u^{n-j})]\,ds \bigg\|_{L^2(\Omega)} \notag \\
&\le
\|e^{n-1} \|_{L^2(\Omega)}
+\int_{t_{n-1}}^{t_n} c\tau^k \d s
+\int_{t_{n-1}}^{t_n} c \sum_{j=1}^k \|e^{n-j}\| _{L^2(\Omega)} \d s \notag \\
&\le
\|e^{n-1} \|_{L^2(\Omega)}
+c\tau^{k+1}
+c\tau \sum_{j=1}^k \|e^{n-j}\|_{L^2(\Omega)} ,
\end{align}
where we have used \eqref{contraction-e} in deriving the first inequality of \eqref{AC-u-hat-u}.
Combining \eqref{Ac-en-1} and \eqref{AC-u-hat-u}, we have
\begin{align}
\|e^n\|_{L^2(\Omega)}
&\le
\|e^{n-1} \|_{L^2(\Omega)}
+c\tau^{k+1}
+c\tau \sum_{j=1}^k \|e^{n-j}\|_{L^2(\Omega)} .
\end{align}
Then, summing up the inequality above for $n=k,\dots,m$, we obtain
\begin{align*}
\|e^m\|_{L^2(\Omega)}
&\le
c\sum_{n=0}^{k-1}\|e^{n}\|_{L^2(\Omega)}
+c\tau^{k}
+c\tau \sum_{n=k}^m\|e^{n-1}\|_{L^2(\Omega)} .
\end{align*}
By applying Gronwall's inequality, we obtain the desired estimate in Theorem \ref{thm:err-AC}.
\hfill\end{proof}

\subsection{Exponential cut-off methods}\label{section:fully-AC}

For a given $u^{n-1}_h\in S_h^r$, the nodal vector ${\bf u}^{n-1}$ is uniquely determined. We apply the exponential lumped mass method \eqref{exp-GL} to the Allen--Cahn equation \eqref{eqn:AC}:
\begin{align}\label{exp-GL-AC}
\hat{\bf u}^n = e^{\tau {\bf M}^{-1}{\bf A}}{\bf u}^{n-1} + \int_{t_{n-1}}^{t_n}e^{(t_n-s) {\bf M}^{-1}{\bf A}} \sum_{j=1}^k L_j(s) f({\bf u}^{n-j})  \,ds ,
\end{align}
where the nonlinear term is extrapolated as the semi-discrete scheme \eqref{AC-exp}. Then we truncate the nodal vector $\hat{\bf u}^n$ by
\begin{equation}\label{truncation-AC}
\begin{split}
{\bf u}^n & =  \min\big(\max \big(\hat{\bf u}^n, -\bm{\alpha} \big) ,\bm{\alpha}\big) .
\end{split}
\end{equation}
Similarly as the linear parabolic problem, the finite element function corresponding to the nodal vector $\hat{\bf u}^n$ coincides with the solution $\hat u_h(t_n)$ obtained from the initial-value problem for $t\in(t_{n-1},t_n]$
\begin{align}\label{fully-discrete-AC}
\left\{\begin{aligned}
&(\partial_t \hat u_h , v_h)_h
+ (\partial_x \hat u_h, \partial_x v_h)
= \Big(\sum_{j=1}^k L_j  f(u_h^{n-j}) , v_h \Big)_h
\quad v_h\in S_h^r, \\
&\hat u_h(t_{n-1})=u^{n-1}_h .
\end{aligned}\right.
\end{align}
The nodal vector obtained from \eqref{truncation-AC} can be used to construct a piecewise polynomial $u^{n}_h\in S_h^r$.

The exact solution of \eqref{eqn:AC} satisfies
\begin{align}\label{weak-AC}
(\partial_t u , v) + (\partial_x u , \partial_x v) = (f(u), v) \quad \forall\, v\in H^1\II,
\end{align}
which implies
\begin{align}\label{weak-AC-fe}
(\partial_t u, v_h)_h + (\partial_x \Pi_h u, \partial_x v_h)  = \Big(\sum_{j=1}^k L_j f(u(t_{n-j})), v_h\Big)_h + \mathcal{E}(v_h) \quad\forall\, v_h\in S_h^r,
\end{align}
where $\mathcal{E}(v_h)$ denotes the error of quadrature, interpolation and extrapolation, satisfying the following estimate (similarly as the linear problem):
\begin{align}\label{AC-quadrature-error}
|\mathcal{E}(v_h)|
&\le
c\sum_{i=1}^{M}\|\partial_t u \, v_h\|_{W^{2r,1}(I_i)}h^{2r}
+c\sum_{i=1}^{M}\|f(u) v_h\|_{W^{2r,1}(I_i)}h^{2r} \notag \\
&\quad\,
+\Big|\Big(f(u)-\sum_{j=1}^k L_j(s) f(u(t_{n-j})),v_h\Big)_h\Big| + c|(\partial_x(u-\Pi_hu),\partial_xv_h)|
\notag\\
&\le
ch^{2r} \sum_{i=1}^{M}\sum_{j=0}^r\|\partial_t u\|_{H^{2r-j}(I_i)} \|v_h\|_{H^j(I_i)}
+ch^{2r} \sum_{i=1}^{M}\sum_{j=0}^r\|f(u)\|_{H^{2r-j}(I_i)} \|v_h\|_{H^j(I_i)}   \notag\\
&\quad\,
+ c\tau^k \|v_h\|_{h}
+ch^{r}\|u\|_{H^{r+1}(I_i)}\|\partial_x v_h\|_{L^2(\Omega)}
\notag \\
&\le
ch^{2r} \sum_{i=1}^{M}(\|\partial_t u\|_{H^{2r}(I_i)}+\|f\|_{H^{2r}(I_i)}) \|v_h\|_{H^r(I_i)} \notag\\
&\quad\,
+ c\tau^k \|v_h\|_{h}
+ch^{r}\|u\|_{H^{r+1}(I_i)}\|\partial_x v_h\|_{L^2(\Omega)}
\notag \\
&\le
c(\tau^k+h^r) (\|v_h\|_h +\|\partial_xv_h\|_{L^2(\Omega)} )
\qquad \forall\, v_h\in S_h^r ,
\end{align}
where the last inequality follows from the inverse inequality of the finite element space.

The accuracy of the fully discrete scheme \eqref{exp-GL-AC}-\eqref{truncation-AC} for the Allen--Cahn equation is presented in the following theorem.

\begin{theorem}\label{thm:fully-discrete-AC}
Assume that $|u_0|\le \alpha$ and the maximum principle \eqref{eqn:max-AC} holds, and assume that the starting values $u_h^n$, $n=0,\dots,k-1$, are given and satisfies
$$
|u_h^n(x_j)|\le \alpha , \quad j=0,\dots,Mr,\quad n=0,\dots,k-1.
$$
Then the   {fully discrete} solution given by \eqref{AC-exp}-\eqref{AC-trunc} satisfies
\begin{align}\label{AC-fully-discrete-max-pr}
|u_h^n(x_j)|\le \alpha , \quad j=0,\dots,Mr,\quad n=k,\dots,N,
\end{align}
and  {for $n=k,\ldots,N$}
\begin{align}\label{fully-err-AC}
  \| u(t_n) - u_h^n \|_{L^2(\Omega)} \le  c (\tau^k + h^r) + c\sum_{n=0}^{k-1}\| u(t_n) - u_h^n \|_{L^2(\Omega)} ,
\end{align}
provided that {   $u\in C^1([0,T];H^{2r}(\Omega))$, $f$ is locally Lipschitz continuous and $f(u)\in C^k([0,T];L^2\II)\cap C([0,T];H^{2r}\II)$.}
\end{theorem}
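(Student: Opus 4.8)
The plan is to combine the fully discrete energy argument from the proof of Theorem~\ref{thm:exp-GL} with the treatment of the $k$-step extrapolated nonlinearity from the proof of Theorem~\ref{thm:err-AC}. The maximum principle \eqref{AC-fully-discrete-max-pr} is immediate, since the cut-off \eqref{truncation-AC} forces $|u_h^n(x_j)|\le\alpha$ at every node for $n\ge k$, and the starting values are assumed to have this property. For the error bound I would set $e_h^n=\Pi_h u(t_n)-u_h^n$ and $\hat e_h=\Pi_h u-\hat u_h$, with $\hat u_h$ the piecewise polynomial defined by \eqref{fully-discrete-AC}. Because $v\mapsto\min(\max(v,-\alpha),\alpha)$ is $1$-Lipschitz and $|u(x_j,t_n)|\le\alpha$ by \eqref{eqn:max-AC}, the nodewise comparison used in Theorem~\ref{thm:exp-GL} gives $\|e_h^n\|_h\le\|\hat e_h(t_n)\|_h$, while the initial condition in \eqref{fully-discrete-AC} gives $\hat e_h(t_{n-1})=e_h^{n-1}$; Lemma~\ref{lem:norm-equivalence} will be invoked at the end to pass back to $\|\cdot\|_{L^2(\Omega)}$.

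Next I would derive the error equation by subtracting \eqref{fully-discrete-AC} from \eqref{weak-AC-fe}: for $t\in(t_{n-1},t_n]$,
\begin{align*}
(\partial_t\hat e_h,v_h)_h+(\partial_x\hat e_h,\partial_x v_h)
=\Big(\textstyle\sum_{j=1}^k L_j\big[f(u(t_{n-j}))-f(u_h^{n-j})\big],v_h\Big)_h+\mathcal E(v_h),
\qquad\hat e_h(t_{n-1})=e_h^{n-1},
\end{align*}
where $\mathcal E(v_h)$ satisfies \eqref{AC-quadrature-error}, i.e.\ $|\mathcal E(v_h)|\le c(\tau^k+h^r)(\|v_h\|_h+\|\partial_x v_h\|_{L^2(\Omega)})$ (the time-extrapolation error of $f(u(s))$ is already included there, using $f(u)\in C^k([0,T];L^2(\Omega))$). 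The crucial step, which is what distinguishes this from the linear case, is to estimate the nonlinear term \emph{nodewise}: since $|u(x_l,t_{n-j})|\le\alpha$ and $|u_h^{n-j}(x_l)|\le\alpha$ at every node $x_l$ (maximum principle together with the cut-off, and the hypothesis for $n-j<k$), local Lipschitz continuity of $f$ on $[-\alpha,\alpha]$ yields $|f(u(x_l,t_{n-j}))-f(u_h^{n-j}(x_l))|\le c\,|e_h^{n-j}(x_l)|$; combining this with the uniform bound $|L_j(s)|\le C_k$ on $[t_{n-1},t_n]$ (fixed-degree extrapolation on a uniform grid) and a Cauchy--Schwarz inequality in $(\cdot,\cdot)_h$ gives $\big\|\sum_{j=1}^k L_j(s)[f(u(t_{n-j}))-f(u_h^{n-j})]\big\|_h\le c\sum_{j=1}^k\|e_h^{n-j}\|_h$.

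Then I would test with $v_h=\hat e_h$, use the two estimates above, absorb $\|\partial_x\hat e_h\|_{L^2(\Omega)}^2$ into the left-hand side by Young's inequality, and obtain $\frac{\d}{\d t}(\tfrac12\|\hat e_h\|_h^2)\le c(\tau^k+h^r)^2+c\|\hat e_h\|_h^2+c\sum_{j=1}^k\|e_h^{n-j}\|_h^2$. Integrating over $(t_{n-1},t_n]$ and using $\hat e_h(t_{n-1})=e_h^{n-1}$ together with $\|e_h^n\|_h\le\|\hat e_h(t_n)\|_h$ produces, for $n=k,\dots,N$, the recursion $\|e_h^n\|_h^2\le(1+c\tau)\|e_h^{n-1}\|_h^2+c\tau(\tau^k+h^r)^2+c\tau\sum_{j=1}^k\|e_h^{n-j}\|_h^2$. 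Summing for $n=k,\dots,m$ and applying a discrete Gronwall inequality that absorbs the $k$-step memory term (exactly as in the proof of Theorem~\ref{thm:err-AC}) gives $\max_{k\le n\le N}\|e_h^n\|_h^2\le e^{cT}\big((\tau^k+h^r)^2+\sum_{n=0}^{k-1}\|e_h^n\|_h^2\big)$. Finally, Lemma~\ref{lem:norm-equivalence} together with the interpolation estimate $\|u(t_n)-\Pi_h u(t_n)\|_{L^2(\Omega)}\le ch^{r+1}\|u(t_n)\|_{H^{r+1}(\Omega)}$ (valid since $u\in C^1([0,T];H^{2r}(\Omega))$ and $2r\ge r+1$) converts this into the claimed bound \eqref{fully-err-AC}, the $h^{r+1}$ contributions being absorbed into $c(\tau^k+h^r)$ and the interpolation errors of the starting values into $c\sum_{n=0}^{k-1}\|u(t_n)-u_h^n\|_{L^2(\Omega)}$.

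I expect the main obstacle to be exactly the handling of the nonlinearity: one cannot use an $L^2(\Omega)$-Lipschitz bound for $f$ directly (it fails, for instance, for the logarithmic Flory--Huggins potential of Example~\ref{AC:2}, where $f$ blows up near $\pm1$), so one must use the cut-off to keep every argument of $f$ in the fixed compact set $[-\alpha,\alpha]$ and estimate the discrete inner product node by node; the boundedness of the extrapolation coefficients $L_j$ on $[t_{n-1},t_n]$ is also needed. Once these ingredients are in place, the rest is the standard energy-plus-discrete-Gronwall argument already carried out for Theorems~\ref{thm:exp-GL} and~\ref{thm:err-AC}.
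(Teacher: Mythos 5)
Your proposal is correct and follows essentially the same route as the paper's proof: the nodewise contractivity of the cut-off to get $\|e_h^n\|_h\le\|\hat e_h(t_n)\|_h$, the error equation between \eqref{weak-AC-fe} and \eqref{fully-discrete-AC} with $\mathcal E(v_h)$ bounded by \eqref{AC-quadrature-error}, the nodewise local-Lipschitz bound $\|f(u(t_{n-j}))-f(u_h^{n-j})\|_h\le c\|e_h^{n-j}\|_h$ made legitimate by the cut-off, and the energy-plus-(discrete) Gronwall recursion followed by Lemma~\ref{lem:norm-equivalence}. The only difference is that you spell out more explicitly why the Lipschitz estimate must be applied node by node on $[-\alpha,\alpha]$, a point the paper uses implicitly.
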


\begin{proof}
The cut-off operation \eqref{AC-trunc} guarantees \eqref{AC-fully-discrete-max-pr}. It suffices to prove the error estimate.
 {To this end, we define $e_h^n = \Pi_h u(t_n) - u_h^n$
and note that each node $x_j$ there holds
\begin{align*}
|e_h^n(x_j)|
&= | \min\big( \max \big( u(x_j,t_n) , - \alpha ) ,  \alpha  \big)
- \min\big( \max \big(\hat u_h(x_j, t_n), -\alpha) ,  \alpha \big)  |\\
&\le |  u(x_j,t_n)  -  \hat u_h(x_j, t_n)  | ,
\end{align*}
where the piecewise-defined function $\hat u_h$ is given in \eqref{fully-discrete-AC}.
Then for $\hat e_h=\Pi_hu-\hat u_h$ we have
\begin{align}\label{bfe-AC}
\| e_h^n  \|_h
\le  \|  \hat e_h(t_n)  \|_h\quad\mbox{and}\quad
\| e_h^{n-1} \|_h = \|\hat e_h(t_{n-1})\|_h.
\end{align}}

Besides, the difference between \eqref{weak-AC-fe} and \eqref{fully-discrete-AC} yields the error equation
{    for $t\in(t_{n-1},t_n]$}
\begin{align}\label{error-eq-GL-AC}
\left\{\begin{aligned}
&(\partial_t \hat e_h , v_h)_h
+ (\partial_x \hat e_h, \partial_x v_h) =
\bigg( \sum_{j=1}^k L_j [f(u(t_{n-j}))-f(u_h^{n-j})], v_h \bigg)_h
+\mathcal{E}(v_h) , \\
&\hat e_h(t_{n-1})= \Pi_hu(t_{n-1})-u^{n-1}_h ,
\end{aligned}\right.
\end{align}
which holds for all $v_h\in S_h^r$.

Substituting $v_h=\hat e_h$ into the equation above, we obtain
\begin{align*}
& \frac{\d}{\d t}\bigg( \frac12 \|\hat e_h\|_h^2  \bigg)
+ \|\partial_x \hat e_h\|_{L^2(\Omega)}^2 \\
&\le
c\sum_{j=1}^k\|f(u(t_{n-j}))-f(u_h^{n-j})\|_h \|\hat e_h\|_h
+\mathcal{E}(\hat e_h)  \\
&\le
c\sum_{j=1}^k\|e_h^{n-j}\|_h \|\hat e_h\|_h + c(\tau^k+h^r) (\|\hat e_h\|_h +\|\partial_x\hat e_h\|_{L^2(\Omega)} )  \\
&{   \le
c(\tau^k + h^r)^2 + c\sum_{j=1}^k\|e_h^{n-j}\|_h^2
+ \frac12\|\hat e_h\|_h^2 + \frac12\|\partial_x\hat e_h\|_{L^2(\Omega)}^2 },
\end{align*}
which furthermore reduces to
\begin{align*}
\frac{\d}{\d t}\bigg( \frac12 \|\hat e_h\|_h^2  \bigg)
&\le
c(\tau^k + h^r)^2 + c\sum_{j=1}^k\|e_h^{n-j}\|_h^2
+ \frac12\|\hat e_h\|_h^2  .
\end{align*}
By applying Gronwall's inequality, we obtain
\begin{align*}
\|\hat e_h(t_n)\|_h^2
&\le e^{\tau} c\tau (\tau^k + h^r)^2
+ e^{\tau} c\tau \sum_{j=1}^k\|e_h^{n-j}\|_h^2
+ e^{\tau} \|\hat e_h(t_{n-1})\|_h^2 \\
&\le c\tau (\tau^k + h^r)^2
+ c\tau \sum_{j=1}^k\|e_h^{n-j}\|_h^2
+ (1+c\tau) \|\hat e_h(t_{n-1})\|_h^2.
\end{align*}
 {Then, 
using \eqref{bfe-AC}, we have
\begin{align*}
\|e_h^n\|_h^2
&\le c\tau (\tau^k + h^r)^2
+ c\tau \sum_{j=1}^k\|e_h^{n-j}\|_h^2
+ (1+c\tau) \|e_h^{n-1} \|_h^2 ,
\end{align*}
which can be rewritten as
\begin{align*}
\frac{ \|e_h^n \|_h^2 - \|e_h^{n-1} \|_h^2 }{\tau}
&\le c (\tau^k + h^r)^2
+   c \sum_{j=0}^k\|e_h^{n-j}\|_h^2 .
\end{align*}
By using discrete Gronwall's inequality, we obtain
\begin{align*}
\max_{1\le n\le N}\|e_h^n\|_h^2
&\le e^{cT}  c(\tau^k + h^r)^2 + e^{cT} c \sum_{n=0}^{k-1}\|e_h^n\|_h^2 .
\end{align*}}
This and the norm equivalence in Lemma \ref{lem:norm-equivalence} imply the desired result in Theorem \ref{thm:fully-discrete-AC}.
\hfill\end{proof}

{   
\begin{remark}
{\upshape
The starting values at the first $k-1$ time levels can be first computed by a single-step high-order implicit method (such as the Runge--Kutta method) and then post-processed by the cut-off operation.
The accuracy will not be destroyed by the cut-off operation since there are only $k-1$ time levels (without accumulation in time).
In this way, the obtained starting values have the right order in time, and in particular satisfying $|u^n_h(x_j)| \le \alpha$.
}
\end{remark}
}

\section{Numerical results}\label{section:numerical}
In this section, we present numerical results to illustrate
the fully discrete scheme \eqref{exp-GL-AC}-\eqref{truncation-AC} with one- and two-dimensional examples.
In our computation,
we compute the exponential integator $e^{\Delta t}$ by using inverse Laplace
transform and approximating an contour integral over a hyperbola (see e.g., \cite[Section 4]{WeidemanTrefethen:2007}
and \cite[Section 4]{JinLSZ:2016}).

\begin{example}[The Ginzburg--Landau potential]\label{Example1}
{\upshape We begin with the following one-dimensional Allen--Cahn equation:
\begin{equation}\label{eqn:AC-1}
    \left\{\begin{aligned} \partial_t  u  - \epsilon^2\partial_{xx} u  &=  f(u),&&
    \mbox{in}\,\,\,\Omega\times(0,T],\\
\partial_xu &=0,&&
\mbox{on}\,\,\,\partial\Omega\times(0,T] , \\
    u|_{t=0} &= u_0 , && \mbox{in}\,\,\,\Omega,
   \end{aligned} \right.
\end{equation}
where $\Omega=(-1,1)$ and $\epsilon=0.01$, and  $f(u) =  u-u^3 $  is the Ginzburg--Landau double-well potential in Example \ref{AC:1}. The solution of \eqref{eqn:AC-1}
satisfies the maximum principle \eqref{eqn:max-AC} with $\alpha=1$.
The initial value is given by
\begin{equation}\label{ini-01}
u_0(x) = \alpha\Big[\chi_{(-1,-\frac12)}(x) + \chi_{(-\frac12,1)} (x) \cos\Big(\frac32\pi \big(x+\frac12\big)\Big)\Big] ,
\end{equation}
where  $\chi$ denotes the characteristic function. This initial value is chosen to be smooth and satisfying the Neumann boundary condition.

We solve \eqref{eqn:AC-1} by the proposed method with temporal stepsize $\tau=T/N$ and spatial mesh size $h=2/M$. For a $k$-step method with $k=2,3,4$, we compute the numerical solution at the first $k-1$ time levels by using the two-stage Gauss--Legendre Runge--Kutta method (cf. \cite[p. 47]{Iserles:1996}), which has 4th-order accuracy in time, and therefore sufficiently accurate as starting approximations in view of the error estimate in \eqref{fully-err-AC}. Cutting off the numerical solutions at the first $k-1$ time levels does not affect the accuracy.

{   We present the temporal error $e_\tau$ and spatial error $e_h$ in in Tables \ref{tab:a-time} and \ref{tab:a-space}, respectively.
Since the exact solution is unavailable, we compute the reference solution by the exponential cut-off scheme with $r=k=4$ and a finer mesh.
In particular, the temporal error $e_\tau$ is computed by fixing the spatial mesh size $h=1/400$
and comparing the numerical solution with a reference solution (corresponding to $\tau = 1/400$).
Similarly, the spatial error $e_h$ is computed to by fixing the temporal step size $\tau=1/400$ and
comparing the numerical solutions with a reference solution (corresponding to $h = 1/400$).  }

\begin{table}[htb!]
\caption{Example {\rm\ref{Example1}:} Temporal error $e_\tau$
at $T=1$ and $T=5$, with  $\tau = T/N$ and $h=1/400$.}\label{tab:a-time}
\begin{center}
     \begin{tabular}{|c|c|ccccc|c|}
     \hline
     $k$ & $ N$    &$10 $ &$20 $ & $40 $ & $80 $ &$160 $& rate \\
     \hline
        2  &   $T=1$       &7.29e-4 &1.88e-4 &4.79e-5 &1.21e-5 &3.03e-6  &  $\approx$ 1.99 \\
      &  $T=5$       &1.75e-3 &2.81e-4 &6.87e-5 &1.70e-5 &4.22e-6  &  $\approx$ 2.01 \\
      \hline
   3  &   $T=1$       &1.15e-4 &1.52e-5 &1.95e-6 &2.46e-7 &3.10e-8  &  $\approx$ 2.99 \\
      &  $T=5$       &1.18e-2 &8.16e-5 &9.01e-6 &1.07e-6 &1.31e-7  &  $\approx$ 3.00 \\
      \hline
     4 & $T=1$       &2.46e-5 &1.64e-6 &1.04e-7 &6.50e-9 &3.96e-10  &  $\approx$ 4.02 \\
      &  $T=5$       &1.22e-1 &3.65e-3 &2.30e-6 &1.38e-7 &8.25e-9  &  $\approx$ 4.09 \\
      \hline
     ETD-RK2 \cite{DuJuLiQiao:2019}& $T=1$       &2.69e-3 &7.32e-4 &1.91e-4 &4..89e-5 &1.23e-5  &  $\approx$ 1.98 \\
      &  $T=5$       &1.30e-2 &2.91e-3 &7.29e-4 &1.74e-4 &3.24e-5  &  $\approx$ 2.24 \\
      \hline
     \end{tabular}
\end{center}
\end{table}

\begin{table}[htb!]
\caption{Example {\rm\ref{Example1}:} Spatial error $e_h$
at $T= 1$,  with $h=2/M$ and $\tau=1/400$.}\label{tab:a-space}
\begin{center}
     \begin{tabular}{|c|ccccc|c|}
     \hline
     $r\backslash M$    &$10 $ &$20 $ & $40 $ & $80$ & $160$  & rate \\
     \hline
         1      &4.59e-2 &1.22e-2 &3.14e-3 &7.90e-5 &7.90e-5   &  $\approx$ 1.97  \\

        2        &6.18e-3 &9.22e-4 &1.13e-4 &1.42e-5   &1.76e-6 &  $\approx$ 3.00 \\

     3         &1.27e-3 &6.49e-5 &5.43e-6 &3.44e-7 &2.15e-8  &  $\approx$ 3.99 \\

     4       &1.61e-4 &1.19e-5 &2.96e-7 &9.36e-9  &3.14e-10  &  $\approx$ 4.95 \\

  ETD-RK2   \cite{DuJuLiQiao:2019} &6.02e-2 &3.30e-2 &8.03e-3 &2.03e-3  &5.08e-4  &  $\approx$ 2.00  \\
      \hline
     \end{tabular}
\end{center}
\end{table}

Numerical results show that the temporal discretization error $e_\tau$ is $O(\tau^k)$, which is consistent with the theoretical result proved in Theorem \ref{thm:fully-discrete-AC}.
Numerical results show that the spatial error is $O(h^{r+1})$, one order higher than the result proved in Theorem
\ref{thm:fully-discrete-AC}.
For comparison, in Tables \ref{tab:a-time} and \ref{tab:a-space} we also presented the errors of the stabilized ETD-RK2 method with stabilization parameter $\kappa=2$, which has been proved to satisfy the maximum principle \cite{DuJuLiQiao:2019}.

\begin{figure}[h]
\centering
\includegraphics[trim = .1cm .1cm .1cm .1cm, clip=true,width=1\textwidth]{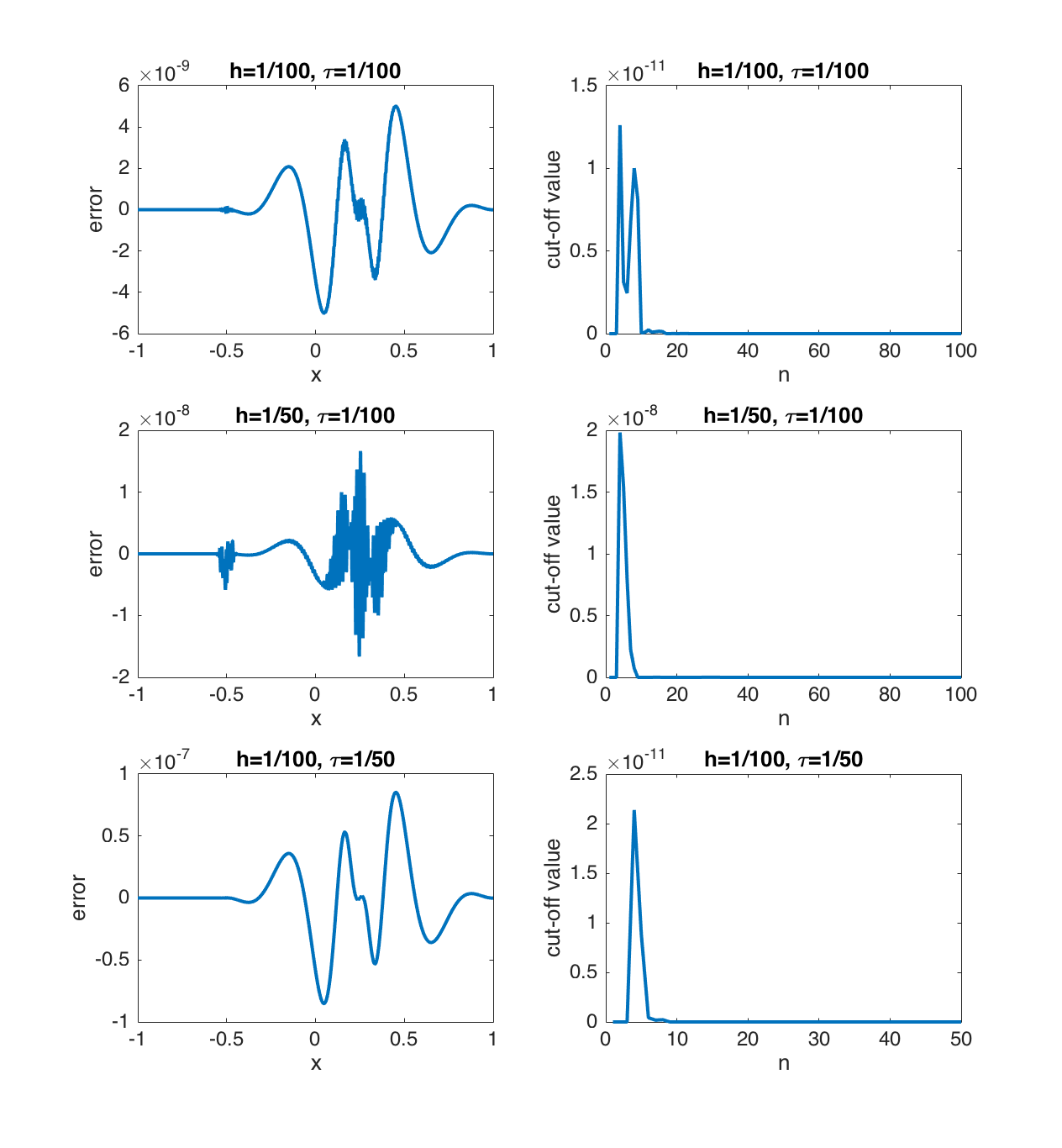}
\vspace{-35pt}
\caption{Example {\rm\ref{Example1}:} Error at $T=1$ and maximal cut-off value at each time level.
\label{fig:err_trun}}
\end{figure}

In Figure \ref{fig:err_trun}, we plot the error of the numerical solution $e(x)=u_h^N(x) - u(x,T)$
and the maximal cut-off value
$$\rho^n =  \max |u^n - \hat u^n |.$$
Numerical results show that the cut-off function is active in the computation,
especially in the starting stage. Meanwhile, it is not surprising that a coarse mesh size will result in a large cut-off value, even though it does not affect the convergence rate, cf. Tables \ref{tab:a-time} and \ref{tab:a-space}.

}
\end{example}

\vskip5pt
\begin{example}[The Flory--Huggins potential]\label{Example2}
{\upshape
Now we consider the one-dimensional Allen--Cahn model \eqref{eqn:AC-1}--\eqref{ini-01} with the logarithmic Flory--Huggins potential and
\begin{equation}\label{FH}
 f(u)  = u -  \frac{1}{8} \ln \Big(\frac{1+u}{1-u}\Big).
\end{equation}
The exact solution satisfies the maximum principle $|u|\le \alpha$ with $\alpha=0.99933$ being the root of the equation
\begin{align*} 
   \ln\Big(\frac{1+\alpha}{1-\alpha}\Big) = 8\alpha,
\end{align*}%
which can be approximately solved by Newton's method.

In Figure \ref{fig:b-without-cut}, we plot the numerical solution without cut-off post-processing, at $T=1$, where $h=\tau=1/100$. The numerical solution becomes complex, and its real part significantly exceeds $[-1,1]$.
{This shows that the scheme without cut-off operation is unstable and inaccurate.
The numerical solutions with cut-off operation are plotted in Figure  \ref{fig:b-cut}, where we see that the numerical solution is closed to the exact solution} (a reference solution with
$k=r=4$ and $h=\tau=1/1000$). We also see that the cut-off values do not decay but keep stable for large $n$.

The error and convergence rate are presented in Table \ref{tab:b-space}, where we also compare the proposed high-order scheme with the stabilized ETD-RK2 method \cite{DuJuLiQiao:2019}.
In the stabilized ETD-RK2 method, the stabilisation parameter $\kappa$ should satisfy the following criterion \cite{ShenTangYang:2016}:
\begin{equation}\label{stab-cond}
 \frac{1}{\tau} + \kappa \ge \frac{1}{4(1-\alpha^2)} - 1.
\end{equation}
Since $\alpha=0.99933$ is too close to $1$ (at which the logarithmic potential is singular), the accuracy of numerical solution is affected and the convergence rates in \ref{tab:b-space} is not as perfect as in Tables \ref{tab:a-time}--\ref{tab:a-space}.  Neverthness, the numerical results in Table \ref{tab:b-space} still show the superiority of the cut-off exponential lumped mass FEM proposed in this article.

\begin{figure}[h]
\centering
\includegraphics[trim = .1cm .1cm .1cm .1cm, clip=true, width=.9\textwidth]{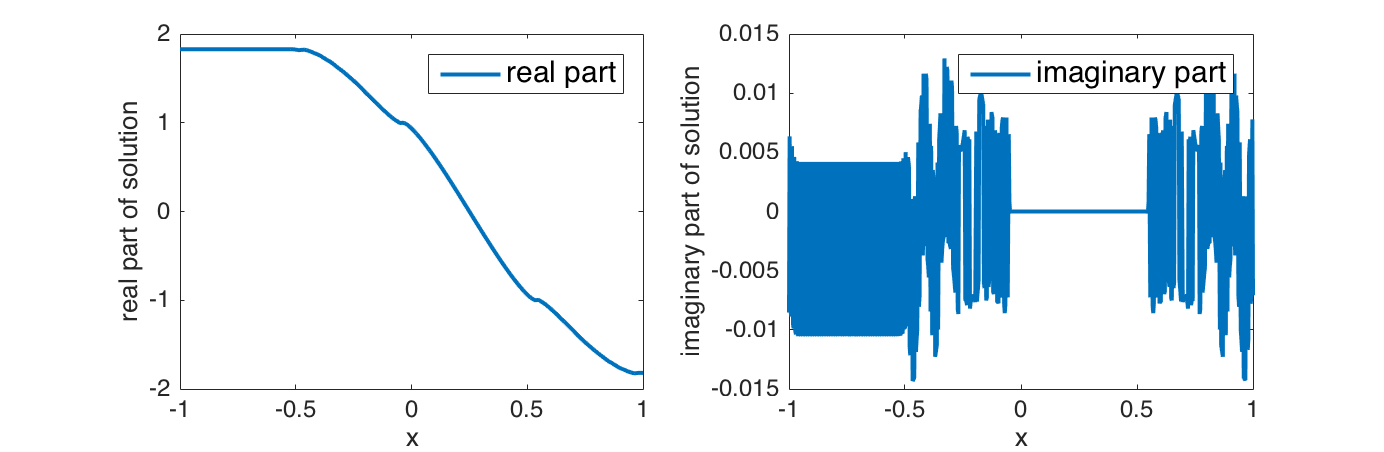}
\vspace{-10pt}
\caption{{\rm (Example \ref{Example2})} Numerical solution without cut-off operations, with $h=\tau=1/100$.
\label{fig:b-without-cut}}
\vspace{10pt}
\centering
\includegraphics[trim = .1cm .1cm .1cm .1cm, clip=true,width=.9\textwidth]{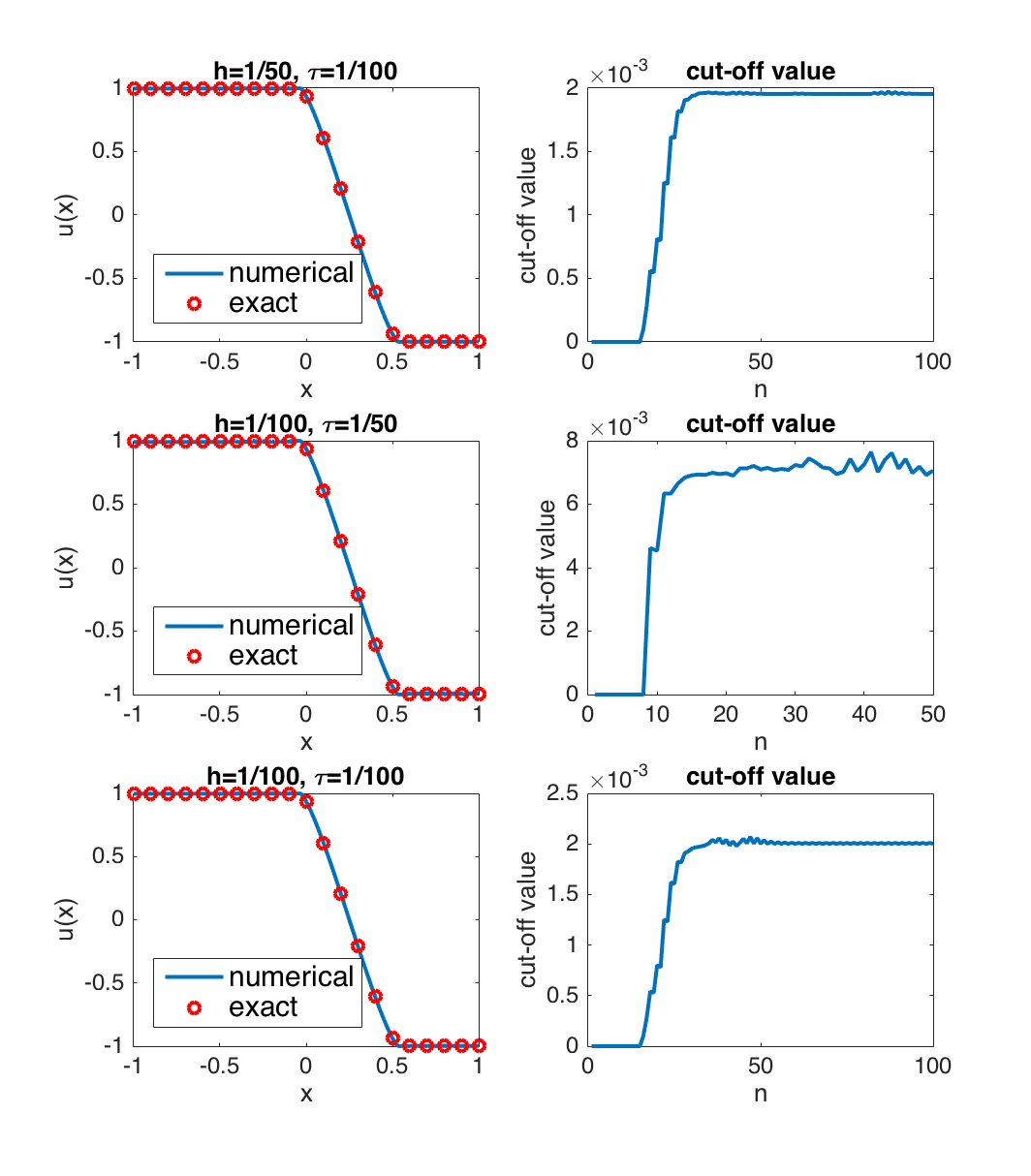}
\vspace{-35pt}
\caption{{\rm (Example \ref{Example2})} Numerical solution $T=1$ and cut-off value at each time level.
\label{fig:b-cut}}
\end{figure}

\begin{figure}[h]
\centering
\includegraphics[trim = .1cm .1cm .1cm .1cm, clip=true,width=0.95\textwidth]{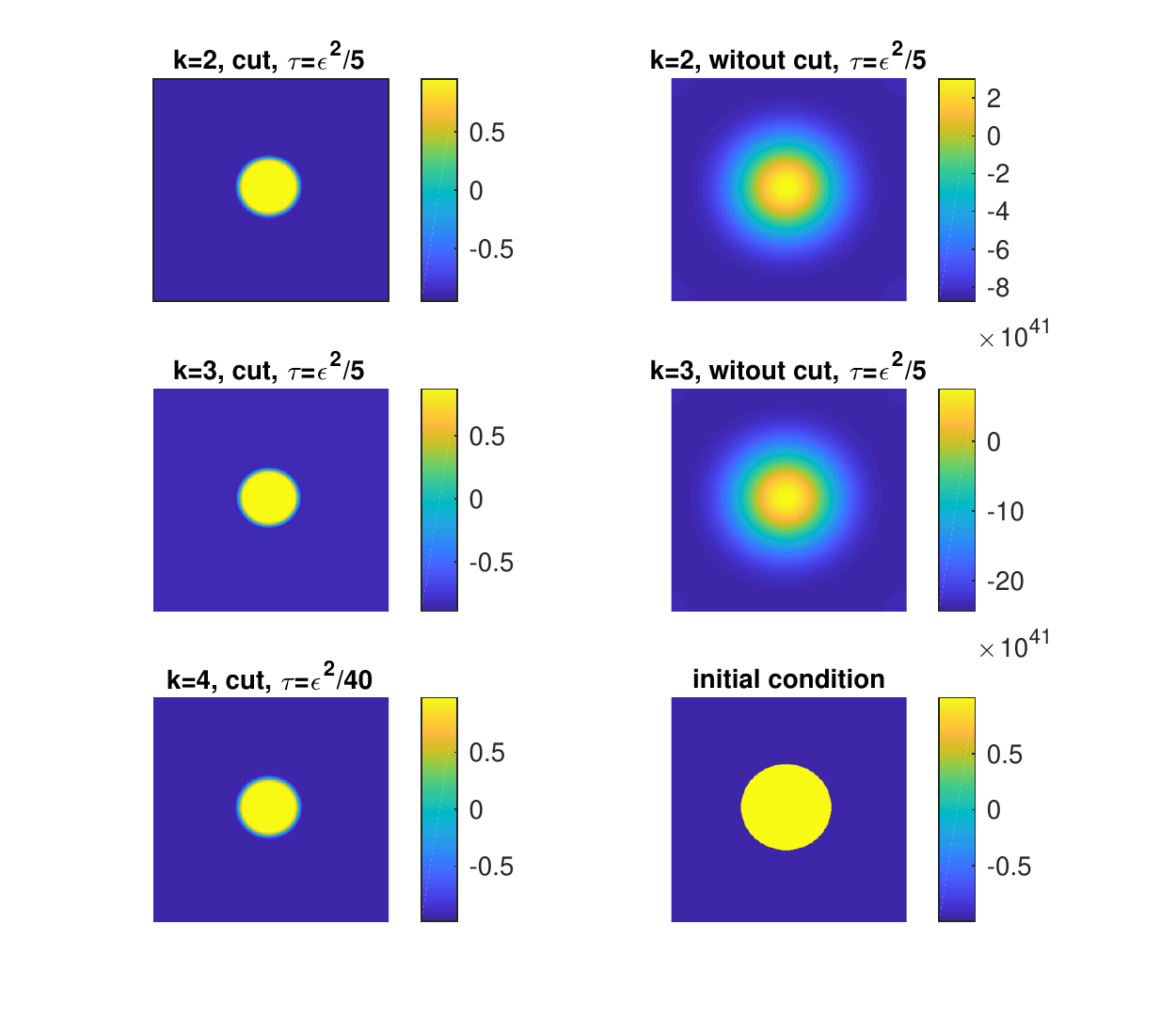}\vskip-0.5in
\caption{{\rm(Example \ref{Example3})} Numerical solution with or without cut-off post-processing, at $T=\epsilon$.
\label{fig:c-cut}}
\vspace{15pt}
\centering
\includegraphics[trim = .1cm .1cm .1cm .1cm, clip=true,width=0.75\textwidth]{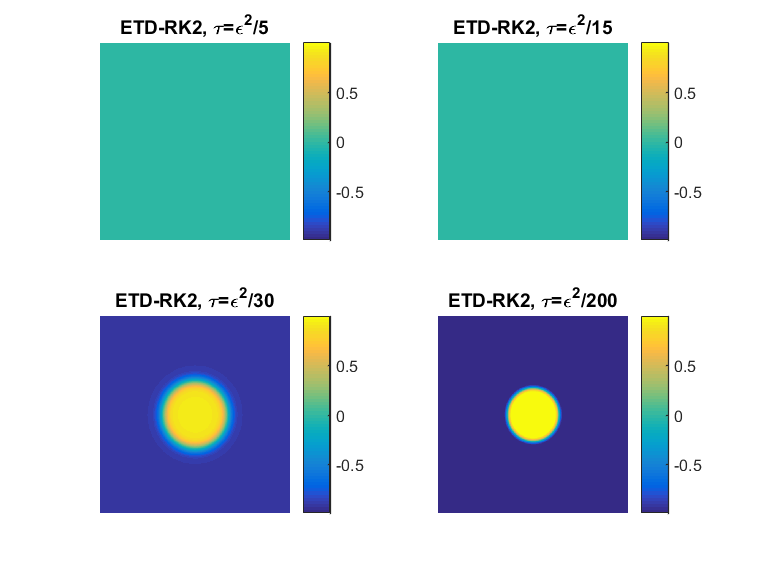}\vskip-0.35in
\caption{{\rm(Example \ref{Example3})} Numerical solution of stabilized ETD-RK2 scheme  \cite{DuJuLiQiao:2019}, at $T=\epsilon$.
\label{fig:c-etdrk2}}
\end{figure}

\begin{table}[h]
\caption{{Example \ref{Example2}:} Error of numerical solutions at $T = 1$, with $h=2/M$ and $\tau=1/M$.}\label{tab:b-space}
\begin{center}
     \begin{tabular}{|c|ccccc|c|}
     \hline
     $r,k\backslash M$    &$20 $ &$40 $ & $80 $ & $160$ & $320$   \\
     \hline

     $r=k=2$         &1.12e-2 &4.03e-3 &4.70e-4 &1.83e-4   &1.27e-5   \\
                       &             &   1.47      & 3.10   &  1.36  &  3.85\\
                         \hline
    $r=k=3$         &2.85e-2 &1.11e-2 &3.91e-3 &5.33e-4 &4.58e-6  \\
     &             &   1.36     & 1.50   &  2.88  &  6.86\\
       \hline
  ETD-RK2 (cf. \cite{DuJuLiQiao:2019}) &2.11e-1 &1.25e-1 &5.60e-2 &1.97e-2  &5.92e-3    \\
    &             &   0.75    & 1.16   &  1.51  &  1.73\\
      \hline
     \end{tabular}
\end{center}
\end{table}

}

\end{example}

\begin{example}[The two-dimensional Allen--Cahn equation]\label{Example3}
{\upshape
We consider the following Allen--Cahn equation in the two-dimensional domain $\Omega=(0,2\pi)^2$:
 \begin{equation}\label{eqn:AC-2D}
    \left\{\begin{aligned} \partial_t  u  -  \Delta u  &= f(u),&&
    \mbox{in}\,\,\,\Omega\times(0,T],\\
    u|_{t=0} &= u_0 , && \mbox{in}\,\,\,\Omega,\\
    \text{$u$ satisfies the} & ~\text{periodic boundary condition,}
   \end{aligned} \right.
\end{equation}
with the Flory--Huggins potential 
$$
f(u)  = \frac1{\epsilon^2} \bigg[u -  \frac{1}{8} \ln \Big(\frac{1+u}{1-u}\Big)\bigg] 
$$
and a parameter $\epsilon = 0.01$. The initial condition is given by
$$ u(x_1,x_2,0) = \alpha \Big(\chi\big((x_1-\pi)^2 + (x_2-\pi)^2 \le 1.2\big)-2\Big),
$$
which yields a circular initial interface.

In the spatial discretization, we divide the interval $(0,2\pi)$ into $M$ sub-intervals, with mesh size $h=2\pi/M$. Correspondingly, the domain $\Omega$ is divided into $M^2$ small squares. We apply the lumped mass FEM in space with $r=1$ and $M=1/500$, and investigate the numerical results given by different time-stepping schemes. The exponential integrator is evaluated by using FFT.

In Figure \ref{fig:c-cut}, we plot the numerical solution of the proposed cut-off expontential scheme \eqref{exp-GL-AC}--\eqref{truncation-AC} at $T=1$. Since the closed form of the exact solution is unavailable, we compute the reference solution by choosing $k=4$ and a sufficiently small stepsize $\tau=1/2000$. From Figure \ref{fig:c-cut} we see that the numerical solution given by the proposed method with stepsize $\tau=1/500$ agrees well with the reference solution, while the numerical method without cut-off operation is inaccurate and unstable. These numerical results show the effectiveness of the cut-off operation in improving the accuracy of numerical solutions (without restriction on the temporal stepsize and spatial mesh size).

In Figure \ref{fig:c-etdrk2}, we plot the numerical solutions given by the stabilized ETD-RK2 method \cite{DuJuLiQiao:2019} for comparison with the method proposed in this article. In the ETD-RK2 method, the stabilization parameter is chosen to be (cf. \cite{ShenTangYang:2016,DuJuLiQiao:2019})
$$\kappa = \frac1{\epsilon^2}\Big[\frac1{4(1-\alpha^2)} - 1\Big]\quad \text{with}\quad \alpha\approx 0.99933.$$
in order to preserve the maximum principle in the discrete level. For a sufficiently small stepsize, such as $\tau=\epsilon^2/200$,
 the ETD-RK2 yields the same pattern as our method. For larger time stepsizes, such as 
$\tau=\epsilon^2/5,\epsilon^2/15$ and $\epsilon^2/30$, the ETD-RK2 method does not yield the correct pattern, 
 while our method still yields the correct pattern (as shown in Figure \ref{fig:c-cut}). 
 The reason is that the stabilization term is very large when the stepsize $\tau$ is not small, 
 and this significantly affects the accuracy of numerical solutions. Since our method does 
 not contain any stabilizer (which introduces extra artificial error), it is more robust and accurate for larger time stepsizes.

}
\end{example}

\section{Conclusion}

We have proposed a class of arbitrarily high-order MMP methods for semilinear parabolic equations based on a $k$-step exponential integrator in time and $r$th-order lumped mass finite element methods in space, and a cut-off operation which eliminates the extra values violating the maximum principle at nodal points of the finite elements. We have proved that the proposed method has at least $k$th-order convergence in time and $r$th-order convergence in space, without restriction on the temporal stepsize and spatial mesh size. The numerical results in Example \ref{Example3} have shown the accuracy and effectiveness of the proposed method for the Allen--Cahn equation in capturing a sharp interface with relatively large time stepsize. The numerical results in Examples \ref{Example1}--\ref{Example2} show that the solution has $k$th-order convergence in time and $(r+1)$th-order convergence in space, one-order higher than the result proved in this paper. The loss of one-order convergence in our proof is due to the cut-off operation. Theoretical proof of the $(r+1)$th-order convergence in space is still challenging.

{   
It is known that \eqref{exp-GL-AC} is the exponential multistep method in \cite[Section 2.5]{Hochbruck-Ostermann-2010} for semilinear parabolic problems. In addition to the exponential multistep method, one can also replace \eqref{exp-GL-AC} by the exponential Runge--Kutta method  (cf. \cite[Section 2.3]{Hochbruck-Ostermann-2010}), e.g., the exponential Euler method and higher-stage methods \cite{Cox-Matthews-2002,Hochbruck-Ostermann-2005a}.
Practically, one can cut the nodal values of the numerical solutions using any exponential integrator. We have focused on the exponential multistep method in this article because it is linearly implicit, therefore relatively easier for both implementation and error analysis. The convergence and error estimates for the exponential Runge--Kutta cut-off method (with internal stages) still remains open.
}

\bigskip
\section*{Acknowledgements}
The research of B. Li is partially supported by a Hong Kong RGC grant (project no. 15300519). The work of J. Yang is supported by National Natural Science Foundation of China (NSFC) Grant No. 11871264,  Natural Science Foundation of Guangdong Province (2018A0303130123), and NSFC/Hong Kong RRC Joint Research Scheme (NFSC/RGC 11961160718).
The research of Z. Zhou is partially supported by the Hong Kong RGC grant (project no. 25300818).
\bibliographystyle{abbrv}

\end{document}